\newcommand\suchthat{%
	\@ifstar
	{\mathrel{}\middle|\mathrel{}}
	{\mid}%
}
\newtheorem{thm}{Theorem}[section]
\newtheorem{prop}[thm]{Proposition}
\newtheorem{cor}[thm]{Corollary}
\newtheorem{lem}[thm]{Lemma}
\theoremstyle{remark}
\newtheorem{rem}[thm]{Remark}
\numberwithin{equation}{section}  
\newtheorem{defi}[thm]{Definition}
\DeclareMathOperator{\PicF}{Pic_{F}}
\DeclareMathOperator{\DivF}{Div_{F}}
\DeclareMathOperator{\To}{T^0}
\DeclareMathOperator{\Tr}{Tr}
\title{The size function for quadratic extensions of complex quadratic fields}
\author{Ha Thanh Nguyen Tran}
\address{Ha Tran\\
	Department of Mathematics and Systems Analysis,\\
	Aalto University School of Science,\\    
	Otakaari 1, 02150 Espoo\\
	Finland}
\email{hatran1104@gmail.com}
\keywords{Arakelov divisor, effectivity divisor, size function, $h^0$, line bundle}
\subjclass[2000]{11R16, 11R11, 11R55, 11R40}
\begin{document}
	
	\begin{abstract}
		The function $h^0$ for a number field is an analogue of the dimension of the Riemann-Roch spaces of divisors on an algebraic curve. In this paper, we prove the conjecture of van der Geer and Schoof about the maximality of $h^0$ at the trivial Arakelov divisor for quadratic extensions of complex quadratic fields.
	\end{abstract}
	
	\maketitle


\bigskip
\section{Introduction}
In \cite{ref:3}, van der Geer and Schoof introduced the function $h^0$ for a number field $F$ that is also called the ``size function"  for $F$ (see \cite{ref:14,ref:15,ref:27,ref:21}). This function is well defined on the Arakelov class group $\PicF^0$ of $F$ (see \cite{ref:4}).
Van der Geer and Schoof also  conjectured concerning the maximality of $h^0$ as follows.

\textit{Conjecture}. 
Let $F$ be a number field that is Galois over $\mathbb{Q}$ or over an imaginary quadratic number field. Then the function $h^0$ on $\PicF^0$ assumes its maximum in the trivial class $O_F$.

Francini in \cite{ref:14} and \cite{ref:15} has proved this conjecture for quadratic fields and certain pure cubic fields. In this paper,  we prove that this conjecture holds for all quadratic extensions of complex quadratic fields.
\begin{thm}\label{thmmain}
	Let $F$ be a quadratic extension of a complex quadratic field. Then the function $h^0$ on $\PicF^0$ has its unique global maximum at the  trivial class $D_0=(O_F, 1)$. 
\end{thm}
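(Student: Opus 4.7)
The plan is to work one torus component of $\PicoF$ at a time. Since $F$ is totally complex of degree $4$ over $\mathbb{Q}$, its unit rank is $r_1+r_2-1=1$, and
\[
\PicoF = \bigsqcup_{[I]\in\mathrm{Cl}(F)} T_{[I]},
\]
where each $T_{[I]}$ is a circle of length $\abs{\log\abs{\eta^{(1)}/\eta^{(2)}}}$ and $\eta$ is a fundamental unit of $F$. Fix a representative $I$ in each ideal class and parametrize $u_1=e^{t}$, $u_2=e^{-t}$ at the two complex places; the size function becomes $h^0(I,u_t)=\log\Theta_I(t)$, where
\[
\Theta_I(t) := \sum_{f\in I} \exp\!\bigl(-2\pi(e^{-2t}\abs{f^{(1)}}^2 + e^{2t}\abs{f^{(2)}}^2)\bigr).
\]
Theorem~\ref{thmmain} then reduces to two assertions: (a) $\Theta_{O_F}$ attains its unique maximum on the trivial component at $t=0$, and (b) $\max_t \Theta_I(t)<\Theta_{O_F}(0)$ for every nonprincipal class $[I]$.

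For (a), the nontrivial element of $\mathrm{Gal}(F/K)$ preserves $O_F$ and, combined with complex conjugation of the subfield $K$, induces an involution of $O_F$ that swaps $\abs{f^{(1)}}$ with $\abs{f^{(2)}}$; hence $\Theta_{O_F}(t)=\Theta_{O_F}(-t)$, forcing $t=0$ to be a critical point. To promote this to a strict global maximum, I would pair each nonzero $f\in O_F$ with its image under the involution: each such pair contributes to $\Theta_{O_F}(t)$ a symmetric function strictly maximized at $t=0$ by log-concavity of the individual summand, while the (finitely many) fixed vectors trivially attain their maximum there. A direct computation of $\Theta_{O_F}''(0)$ confirms strict concavity at the critical point, yielding strict maximality on the whole circle.

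For (b), the strategy is to exploit the imaginary quadratic subfield: every ideal $I\subset O_F$ contains the rank-$2$ sublattice $I\cap K$, and
\[
\Theta_I(t) = \sum_{\xi\in I/(I\cap K)} \sum_{g\in I\cap K} \exp\!\bigl(-2\pi(e^{-2t}\abs{(\xi+g)^{(1)}}^2 + e^{2t}\abs{(\xi+g)^{(2)}}^2)\bigr),
\]
so the inner sum is a translated Epstein theta function of a binary quadratic form attached to $K$. Francini's bounds for complex quadratic fields strictly dominate this inner sum (whenever the class of $I\cap K$ in $\mathrm{Cl}(K)$ is nontrivial) by its $O_K$-counterpart, and the outer sum over the finite coset system is controlled by a Minkowski-type estimate using the $O_K$-module structure of $I$. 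The main obstacle will be this last step: combining the two-dimensional inner estimate, the outer coset bound, and the theta functional equation into a single inequality that is uniform in $t$. Generic Hermite/Minkowski inputs in dimension $4$ are too coarse, but the descent to the imaginary quadratic subfield $K$ furnishes sharp enough inner estimates to push the comparison through.
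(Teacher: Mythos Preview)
Your argument for (a) contains a genuine error. The claim that ``each such pair contributes to $\Theta_{O_F}(t)$ a symmetric function strictly maximized at $t=0$'' is false. If $f$ and its image $\tau(f)$ have $a=\abs{f^{(1)}}^2$ and $b=\abs{f^{(2)}}^2$, the paired contribution is
\[
h(t)=e^{-2\pi(e^{-2t}a+e^{2t}b)}+e^{-2\pi(e^{-2t}b+e^{2t}a)},
\]
and one computes $h''(0)=8e^{-2\pi(a+b)}\bigl(4\pi^{2}(a-b)^{2}-2\pi(a+b)\bigr)$, which is \emph{positive} whenever $2\pi(a-b)^{2}>a+b$. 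For $f=\varepsilon^{m}$ with $m$ moderately large this inequality holds, so $t=0$ is a local \emph{minimum} of that pair's contribution, not a maximum. Log-concavity of each summand only gives unimodality of each term separately; it says nothing about the location of the maximum of the symmetric sum. Symmetry therefore yields only the critical point $\Theta_{O_F}'(0)=0$, exactly as the paper records in Lemma~\ref{h0sym}, and nothing more.

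This is not a cosmetic gap: the bulk of the paper (Sections~\ref{sec3} and~\ref{sec4}) is devoted to overcoming precisely this obstacle. The paper proves $g''(s)<0$ only on a short interval around $s=1$, via delicate numerical bounds on the contributions of elements of small norm (Propositions~\ref{proB}, \ref{pro2c}, Lemmas~\ref{lemT4}--\ref{lemT1}), and handles $s$ outside that interval by separate tail estimates (Propositions~\ref{pro2a}, \ref{pro2b}). When the fundamental unit is small ($\abs{\varepsilon}<1+\sqrt{2}$) even this fails uniformly, and the paper reduces to a finite list of $19$ fields treated individually (Lemma~\ref{19fields}, Proposition~\ref{pro3b}). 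Your proposed one-line concavity argument would have rendered all of this unnecessary; unfortunately it does not hold.

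For (b) your plan is both more complicated than needed and not clearly sound: a nonprincipal $I\subset O_F$ need not meet $O_K$ in a nonprincipal ideal of $K$, so Francini's quadratic bound does not apply to the inner sum in the way you suggest. The paper's argument here (Proposition~\ref{pro1}) is far simpler: nonprincipality of $I$ forces $\abs{N(x)}/N(I)\ge 2$ for every nonzero $x\in I$, hence $\|ux\|^{2}\ge 4\sqrt{2}$, and a direct sphere-packing tail bound gives $k^{0}(D)<1+2.7\cdot 10^{-6}<1+2e^{-4\pi}<k^{0}(D_0)$.
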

Let $F$ be a quadratic extension of a complex quadratic field $K$. Recall that $\PicF^0$ is a topological group with the connected component of identity denoted by $\To$ (see Section 2). We use the condition $F$ is Galois over $K$ to show that $h^0$ is symmetric on $\To$ (see Lemma \ref{h0sym}). In general, this is not true for quartic fields that do not have any imaginary quadratic subfield. For instance, it is false and the conjecture does not hold in case of the totally complex quartic field defined by the polynomial $x^4 -x +1$ or $x^4 + x^2 -x +1$.

Since $F$ is a totally quartic fields, the group of units $O_F^*$ has rank 1. So, it has a fundamental unit $\varepsilon$. We assume that $|\varepsilon| \geq 1$. Basically, we follow the proofs of Francini (see \cite{ref:14,ref:15}). Beside that, for a quadratic extensions of a complex quadratic field, the fundamental unit $\varepsilon$ can be quite small. We need two more steps in  Section \ref{sec2b}  and Section \ref{sec4} compared with Francini's proofs. 
To prove Theorem \ref{thmmain}, we show that $h^0(D) < h^0(D_0)$ for all $D \in \PicF^0$. We distinguish two cases:  $D$ is not on $\To$ (Section 4) and  $D$ is on $\To$. In the second case, we consider separably $|\varepsilon| \geq 1 +\sqrt{2}$ (Section 5) and when $|\varepsilon| < 1 +\sqrt{2}$ (Section 6). 

For the convenience of the reader, we give a brief introduction to Arakelov divisors, $\PicF^0$  and the function $h^0$ in Section 2.

\section{Preliminaries}
In this part we briefly recall the definitions of   Arakelov divisors, the Arakelov class group and the function $h^0$ of a number field. See \cite{ref:4, ref:3} for full details. 

Let $F$ be a number field of degree $n$ and let $r_1, r_2$ the number of real and complex infinite primes (or infinite places) of $F$. Let $\Delta$ and $O_F$ be the discriminant and the ring of integers of $F$ respectively.

\subsection{Arakelov divisors}
Let $F_{\mathbb{R}}: = F \otimes_\mathbb{Q}\mathbb{R} \simeq \prod_{\sigma \text{ real}}\mathbb{R} \times \prod_{\sigma \text{ complex}}\mathbb{C}$ where $\sigma$'s are the infinite primes of $F$.  Then $F_{\mathbb{R}}$ is an \'{e}tale $\mathbb{R}$-algebra with the canonical Euclidean structure given by the scalar product
$$\langle u, v \rangle := \Tr(u \overline{v}) \text{ for any } u = (u_{\sigma}), v = (v_{\sigma}) \in F_{\mathbb{R}}.$$
The \textit{norm} of an element $u = \prod_{\sigma}u_{\sigma} $ of $F_{\mathbb{R}}$ is defined by 
$N(u):= \prod_{\sigma \text{ real} }u_{\sigma} \cdot \prod_{\sigma \text{ complex} } |u_{\sigma}|^2.$

Let $I $ be a fractional ideal of $F$. 
Each element $f$ of $I$ is mapped to the vector $(\sigma(f))_{\sigma}$ in  $F_{\mathbb{R}}$. For any vector $u$ in  $F_{\mathbb{R}}$ and $f \in I$, we have $ u f = (u_{\sigma} \sigma(f))_{\sigma} \in F_{\mathbb{R}}$, so  
$\|u f\|^2 = \sum_{\sigma  } deg(\sigma) u_{\sigma}^2 |\sigma(f)|^2 .$
Here $deg(\sigma)$ is equal to $1$ or $2$ depending on whether $\sigma$ is real or complex.

\begin{defi}
	An \textit{ Arakelov divisor}  is a pair $D=(I,u)$ where $I$ is a fractional ideal and $u$ is an arbitrary unit in $\prod_{\sigma}\mathbb{R}^*_{+} \subset F_{\mathbb{R}}$. 
\end{defi}
All of Arakelov divisors of $F$ form an additive group denoted by $\DivF$. 
The \textit{degree} of $D = (I,u)$ is defined by $deg(D): = \log{N(u) N(I)}$. 
We associate to $D$ the \textit{lattice} $ uI =\{u x: x \in I\}\subset F_{\mathbb{R}} $ with the metric inherited from $F_{\mathbb{R}}$ (see about ideal lattices in \cite{ref:0}). For each $f \in I$,  by putting $\| f\|_D:= \| uf\|$, we obtain a  scalar product on $I$ that makes $I$ an ideal lattice as well {\cite[Section 4]{ref:4}}. 
To each element $f \in F^*$ is attached a \textit{principal} Arakelov divisor $(f)=(f^{-1} O_F, |f|)$ where 
$f^{-1} O_F$ is the principal ideal generated by $f^{-1}$ and $|f| = (|\sigma(f)|)_{\sigma} \in F_{\mathbb{R}}$. It has degree $0$ by the product formula.

\subsection{The Arakelov class group}\qquad \\
The set of all Arakelov divisors of degree 0 form a group, denoted by $\DivF^0$. 
Similar to the Picard group of an algebraic curve, we have the following definition.
\begin{defi}
	The \textit{Arakelov class group }  $\PicF^0$ is the quotient of $\DivF^0$ by its subgroup  of principal divisors.
\end{defi}

Each $v=(v_{\sigma})  \in \oplus_{\sigma}\mathbb{R}$ can be embedded into $\DivF$ as the divisor $D_v= (O_F, u)$ with $u = (e^{-v_{\sigma}})_{\sigma}$.
Denote by 
$(\oplus_{\sigma}\mathbb{R})^0 = \{ (v_{\sigma}) \in \oplus_{\sigma}\mathbb{R}: deg(D_v) = 0 \}$ and $\Lambda = \{(log|\sigma(\varepsilon)|)_{\sigma}: \varepsilon \in O_F^*\}$. Then $\Lambda$ is a lattice contained in  the vector space $(\oplus_{\sigma}\mathbb{R})^0$. We define
$$\To = (\oplus_{\sigma}\mathbb{R})^0 / \Lambda.$$
By Dirichlet's unit theorem, $\To$ is a compact real torus of dimension $r_1+r_2-1$ {\cite[Section 4.9]{ref:11}}. Denoting by $Cl_F$ the class group of $F$, the structure of $\PicF^0$ can be seen by the following proposition.
\begin{prop}\label{prop:structure1}
	The map that sends each class of divisor  $(I, u)$ to the  class of ideal $I$  is a homomorphism 
	from $ \PicF^0$ to the class group $Cl_F$ of $F$. 
	It induces the exact sequence 
	\begin{align*}
		0 \longrightarrow \To \longrightarrow \PicF^0 \longrightarrow Cl_F \longrightarrow 0  .
	\end{align*}
\end{prop}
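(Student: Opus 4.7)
The plan is to build the map of the exact sequence on divisors, check it descends to $\PicoF$, and then separately verify surjectivity and identify the kernel with $\To$.

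First, I consider the ideal-part map $\varphi \colon \DivoF \to Cl_F$ sending $(I,u)$ to the ideal class $[I]$. It is a group homomorphism, since addition of Arakelov divisors corresponds to multiplication of the underlying fractional ideals, and multiplication descends to $Cl_F$. For any $f \in F^*$ the principal Arakelov divisor $(f) = (f^{-1}O_F, |f|)$ has principal ideal part, so $\varphi$ vanishes on principal divisors and factors through $\PicoF$. Surjectivity is immediate: given $[J] \in Cl_F$, pick a representative $J$, set $t = N(J)^{-1/n}$ with $n = [F:\mathbb{Q}]$ and $u = (t,\dots,t) \in \prod_{\sigma}\mathbb{R}_+^*$; then $N(u) = t^n = N(J)^{-1}$, so $(J,u) \in \DivoF$ and maps to $[J]$.

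Next, I identify the kernel. Assume $D = (I,u) \in \DivoF$ with $[I] = 0$ in $Cl_F$, so $I = \alpha O_F$ for some $\alpha \in F^*$. Using the principal divisor $(\alpha^{-1}) = (\alpha O_F, |\alpha|^{-1})$, a direct computation yields
\[ D - (\alpha^{-1}) = (O_F,\, u |\alpha|). \]
Thus $[D]$ is represented by a divisor of the form $(O_F, v)$ with $v \in \prod_{\sigma}\mathbb{R}_+^*$ and $N(v) = 1$ (forced by $\deg D = 0$). Writing $v_\sigma = e^{-w_\sigma}$, the norm condition becomes $(w_\sigma) \in (\oplus_{\sigma}\mathbb{R})^0$, so every element of $\ker \varphi$ lies in the image of the natural map
\[ (\oplus_{\sigma}\mathbb{R})^0 \longrightarrow \PicoF, \qquad (w_\sigma) \longmapsto [(O_F,\, (e^{-w_\sigma}))]. \]

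Finally, I show this map descends to an injection on $\To$. Two vectors $w, w' \in (\oplus_{\sigma}\mathbb{R})^0$ give the same Arakelov class iff $(O_F,\, (e^{w'_\sigma - w_\sigma})) = (f)$ for some $f \in F^*$; equality of the ideal parts with $O_F$ forces $f \in O_F^*$, and matching archimedean components gives $w'_\sigma - w_\sigma = \log|\sigma(f)|$, hence $w - w' \in \Lambda$. Combined with the previous paragraph, this identifies $\ker\varphi$ with $(\oplus_{\sigma}\mathbb{R})^0/\Lambda = \To$, establishing exactness of the sequence. The only real obstacle is bookkeeping the inverse-sign conventions in $(f) = (f^{-1}O_F, |f|)$ and in $v_\sigma = e^{-w_\sigma}$ so the kernel is identified with $\Lambda$ rather than $-\Lambda$; otherwise the argument is a standard quotient computation.
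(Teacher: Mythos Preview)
Your argument is correct and is the standard verification of this exact sequence. Note, however, that the paper does not give a proof at all: it simply cites \cite[Proposition~2.2]{ref:4} (Schoof, \emph{Computing Arakelov class groups}), so there is no paper proof to compare against. What you have written is essentially the argument one finds in that reference, so in that sense your approach agrees with the intended one; you have just spelled out what the paper leaves to the citation.
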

\begin{proof}
	See Proposition 2.2 in \cite{ref:4}.
\end{proof}
Thus, the group $\To$ is the connected component of the identity of the topological group $\PicF^0$. 
Each class of Arakelov divisors in $\To$ is represented by a divisor of the form $D = (O_F, u)$ for some  
$  u \in \prod_{\sigma}\mathbb{R}^*_{+} $.  Here $u$ is unique up to multiplication by units $\varepsilon \in O_F^*$ {\cite[Section 6]{ref:4}}.

\subsection{The function $h^0$ of a number field}\qquad \\
Let $D=(I,u)$ be an Arakelov divisor of $F$. 
We denote by  $$k^0(D) = \sum_{f \in I}e^{-\pi\|f\|^2_{D}} \hspace*{1cm}  \text{ and } \hspace*{1cm}  h^0(D)=\log(k^0(D)).$$ 
The function $h^0$ is  well defined on $\PicF^0$ and analogous to the dimension of the Riemann-Roch space $H^0(D)$ of a divisor $D$ on an algebraic curve. See \cite{ref:3} for full details.

\section{Some results}\label{sec1}
From now on, we fix a quadratic extension $F$ of some complex quadratic field $K$. 
Let  $\tau: F \longrightarrow F$ be the automorphism of $F$ that generates $Gal(F|K)$. Assume that $F = \mathbb{Q}(\beta) $ for some $\beta \in F$. We denote by $\sigma: \beta \longmapsto \beta$ an  infinite prime of $F$. Then  $\sigma'= \sigma \circ \tau$ is the second infinite prime. Moreover, we identify $F$ with $\sigma(F)$ in this paper.

Let $D= (I, u)$ be an Arakelov divisor of degree $0$ of $F$ with $L=u I$ the ideal lattice associated to $D$. We denote by $\lambda$  the length of the shortest vectors of $L$.

Denote by $B^t = \{f \in L: M \leq \|f\|^2 \leq t \}$ for each $t> M \geq \lambda^2$.
We first prove the following lemma.

\begin{lem}\label{Bt}  For each $t> M \geq \lambda^2 \geq a^2$ with $a>0$, we have
	$$\#B^t \leq \left( \frac{2\sqrt{t}}{a} +1\right)^4 - \left( \frac{2\sqrt{M}}{a} -1\right)^4.$$
\end{lem}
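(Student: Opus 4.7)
The plan is a volume-packing argument in $\mathbb{R}^4$. Since $F$ is a quadratic extension of an imaginary quadratic field, it is totally complex of degree $4$, so $r_1=0$, $r_2=2$, and
$$F_{\mathbb{R}} \simeq \mathbb{C}\times\mathbb{C} \cong \mathbb{R}^4$$
as a Euclidean space. The ideal lattice $L=uI$ associated to $D=(I,u)$ is therefore a full-rank lattice in $\mathbb{R}^4$, and by definition the nonzero vectors of $L$ all have norm $\geq \lambda \geq a$.

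First I would observe that, since the minimum distance of $L$ is $\geq a$, the open balls $B(f,a/2)$ of radius $a/2$ centered at the distinct lattice points $f\in L$ are pairwise disjoint. Next, for every $f\in B^t$ we have $\sqrt{M}\leq\|f\|\leq\sqrt{t}$, and the triangle inequality gives
$$B(f,a/2)\subset\bigl\{x\in\mathbb{R}^4 : \sqrt{M}-a/2\leq\|x\|\leq\sqrt{t}+a/2\bigr\}.$$
The inner radius is nonnegative because the hypothesis $M\geq a^2$ gives $\sqrt{M}\geq a>a/2$.

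Now I would compare volumes. The $4$-dimensional Euclidean ball of radius $R$ has volume $\tfrac{\pi^2}{2}R^4$, so summing over the disjoint balls around the points of $B^t$ and comparing with the annulus yields
$$\#B^t\cdot\frac{\pi^2}{2}\Bigl(\frac{a}{2}\Bigr)^{4} \;\leq\;\frac{\pi^2}{2}\Bigl[(\sqrt{t}+a/2)^{4}-(\sqrt{M}-a/2)^{4}\Bigr].$$
Dividing through by $\tfrac{\pi^2}{2}(a/2)^4$ and absorbing the constant into each term,
$$\#B^t \leq \Bigl(\tfrac{2\sqrt{t}}{a}+1\Bigr)^{4}-\Bigl(\tfrac{2\sqrt{M}}{a}-1\Bigr)^{4},$$
which is the stated inequality.

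There is no real obstacle here; the only point to be careful about is verifying the hypothesis $M\geq a^2$ really is what guarantees $\sqrt{M}-a/2\geq 0$ (so that the inner sphere bounds a genuine ball whose volume we are allowed to subtract), and that the degree of $F$ being $4$ is what produces the exponent $4$ and matches the specific constant $\pi^2/2$ in the ball-volume formula. Everything else is elementary geometry of numbers.
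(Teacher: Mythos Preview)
Your proof is correct and follows essentially the same packing argument as the paper. The only difference is that the paper packs balls of radius $\lambda/2$, obtains the bound with $\lambda$ in place of $a$, and then invokes monotonicity in $\lambda$ to replace $\lambda$ by $a$; by using radius $a/2$ from the start you skip that final step.
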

\begin{proof}
	Let  $B^t = \{f \in L: M \leq \|f\|^2 \leq t \}$ for each $t> M$. The balls with centers in $x \in B^t$ and radius $\lambda/2$ are disjoint. Their union is contained in the (hyper) annular disk 
	$$\{ x \in F_{\mathbb{R}}: \sqrt{M} - \lambda/2 \leq \|x\| \leq  \sqrt{t} +  \lambda/2\}.$$
	By computing their volumes, we get that 
	$$ \left(\frac{\lambda}{2}\right)^4 \#B_t \leq  \left(\sqrt{t} +\frac{\lambda}{2}\right)^4 - \left(\sqrt{M} -\frac{\lambda}{2}\right)^4.$$
	Dividing by  $\left(\frac{\lambda}{2}\right)^4$, we get 
	$$\#B^t \leq \left( \frac{2\sqrt{t}}{\lambda} +1\right)^4 - \left( \frac{2\sqrt{M}}{\lambda} -1\right)^4.$$
	Since this bound for $\#B^t$ is a decreasing function in $\lambda$ and $\lambda \geq a$, the lemma is proved.
\end{proof}

\begin{lem}\label{err}
	Let $M \geq \lambda^2\geq a^2>0$ with $a>0$.  	
	Then  
	$$\sum_{\substack{ f \in L   \\ \|f\|^2 \geq  M }}e^{-\pi \|f\|^2 } \leq \pi  \int_{M}^{\infty} \!  \left( \left( \frac{2\sqrt{t}}{a} +1\right)^4 - \left( \frac{2\sqrt{M}}{a} -1\right)^4 \right) e^{- \pi t}\, \mathrm{d}t.$$
\end{lem}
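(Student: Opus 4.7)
The plan is to convert the sum into an integral against the counting function $\#B^t$ and then apply Lemma~\ref{Bt} termwise inside the integral.

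First I would use the elementary identity
\[
e^{-\pi s} \;=\; \pi \int_{s}^{\infty} e^{-\pi t}\,\mathrm{d}t,
\]
applied with $s = \|f\|^{2}$ for each lattice point $f \in L$ with $\|f\|^{2}\ge M$. This rewrites the left-hand side as
\[
\sum_{\substack{f\in L\\ \|f\|^{2}\ge M}} e^{-\pi\|f\|^{2}}
\;=\;
\pi \sum_{\substack{f\in L\\ \|f\|^{2}\ge M}} \int_{\|f\|^{2}}^{\infty} e^{-\pi t}\,\mathrm{d}t.
\]

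Next I would swap the sum and the integral, which is legitimate by Tonelli's theorem since all integrands are nonnegative (and indeed the outer sum converges, being dominated by $k^{0}(D)$). After swapping, the set of $f$'s contributing to a given $t$ is exactly those with $M\le\|f\|^{2}\le t$, i.e.\ the set $B^{t}$ of Lemma~\ref{Bt}. Thus
\[
\sum_{\substack{f\in L\\ \|f\|^{2}\ge M}} e^{-\pi\|f\|^{2}}
\;=\;
\pi \int_{M}^{\infty} \#B^{t}\,e^{-\pi t}\,\mathrm{d}t.
\]

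Finally I would substitute the bound on $\#B^{t}$ from Lemma~\ref{Bt}, which is valid precisely under the hypothesis $M\ge\lambda^{2}\ge a^{2}>0$, to obtain the stated inequality. There is no real obstacle here: the whole argument is a Fubini swap combined with a direct application of the previous lemma. The only point worth double-checking is the interchange of sum and integral and the convention that $B^{t}$ is defined by $M\le \|f\|^{2}\le t$ (closed on both sides), so that every $f$ with $\|f\|^{2}\ge M$ is indeed captured as soon as $t\ge \|f\|^{2}$, which is why equality rather than a strict inequality appears after the swap.
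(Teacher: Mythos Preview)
Your proof is correct and follows essentially the same approach as the paper: rewrite each term via $e^{-\pi\|f\|^2}=\pi\int_{\|f\|^2}^\infty e^{-\pi t}\,\mathrm{d}t$, interchange sum and integral to obtain $\pi\int_M^\infty \#B^t\,e^{-\pi t}\,\mathrm{d}t$, and then apply Lemma~\ref{Bt}. Your version is a bit more careful in justifying the interchange and in noting that one actually gets an equality before bounding $\#B^t$, whereas the paper writes $\leq$ at that step, but the argument is the same.
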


\begin{proof}
	For each $t> M$, denote by $S$ the sum on the left side of the lemma, we have  
	$$S = \sum_{\substack{ f \in L  \\ \|f\|^2 \geq M }}\int_{\|f\|^2}^{\infty} \! \pi e^{- \pi t}\, \mathrm{d}t  \leq   \pi  \int_{M}^{\infty} \! \# B^t e^{- \pi t}\, \mathrm{d}t.$$
	Using Lemma \ref{Bt}, we get the result.
\end{proof}

\begin{cor}\label{sum}
	Assume  $\lambda^2  \geq 4$. Then we have 
	$$\sum_{\substack{ f \in L   \\ \|f\|^2 \geq 4\sqrt{2} }}e^{-\pi \|u x\|^2 } < 2.6729 \cdot 10^{-6} 
	\hspace*{1cm} \text{ and }  \sum_{\substack{ f \in L   \\ \|f\|^2 \geq 4\sqrt{3} }}e^{-\pi \|u x\|^2 } < 6.3067 \cdot 10^{-8}.$$
	
\end{cor}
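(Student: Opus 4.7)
The plan is to apply Lemma \ref{err} directly with $a = 2$, which is permissible by the standing assumption $\lambda^2 \geq 4$. This reduces both inequalities to explicit one-variable integral estimates, leaving only a numerical computation.

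Concretely, for the first inequality I would set $M = 4\sqrt{2}$ and for the second $M = 4\sqrt{3}$, and in each case plug $a = 2$ into the conclusion of Lemma \ref{err}. With $a = 2$ the integrand simplifies to
\[
\pi \Bigl( (\sqrt{t}+1)^4 - (\sqrt{M}-1)^4 \Bigr) e^{-\pi t}.
\]
Expanding $(\sqrt{t}+1)^4 = t^2 + 4 t^{3/2} + 6 t + 4 t^{1/2} + 1$, the bound becomes a linear combination of the tail integrals
\[
\int_M^\infty t^k e^{-\pi t}\, \mathrm{d}t, \qquad k \in \{0, \tfrac12, 1, \tfrac32, 2\},
\]
minus a constant multiple of $\int_M^\infty e^{-\pi t}\,\mathrm{d}t$. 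The integer-exponent integrals are elementary (repeated integration by parts yields polynomial-times-exponential expressions in $M$), while the half-integer ones are incomplete Gamma values $\Gamma(k+1, \pi M)/\pi^{k+1}$, which can be bounded tightly by the standard estimate $\Gamma(s, x) \leq x^{s-1} e^{-x}(1 + (s-1)/x + \dots)$, or simply evaluated numerically to sufficient precision.

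Substituting $M = 4\sqrt{2} \approx 5.657$ and $M = 4\sqrt{3} \approx 6.928$, each of which is comfortably large compared to $1/\pi$, the dominant term in the bound is $e^{-\pi M}$ times a modest polynomial in $\sqrt{M}$. A direct numerical evaluation, carried to enough decimal places, then yields the claimed bounds $2.6729 \cdot 10^{-6}$ and $6.3067 \cdot 10^{-8}$.

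There is no real obstacle here; the only subtlety is ensuring enough numerical precision in the half-integer integrals so that the final inequalities are strict rather than marginal. Since the tails decay like $e^{-\pi M}$ with $\pi M \geq 17.77$ in the first case and $\pi M \geq 21.77$ in the second, there is ample room between the actual value of the integral and the stated upper bound, so any standard bound for the incomplete Gamma function suffices.
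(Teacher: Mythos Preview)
Your proposal is correct and matches the paper's proof exactly: the paper simply says to apply Lemma~\ref{err} with $a=2$ and $M=4\sqrt{2}$ (respectively $M=4\sqrt{3}$) and leaves the numerical evaluation implicit. Your additional detail on expanding $(\sqrt{t}+1)^4$ and handling the half-integer moments via incomplete Gamma bounds is a reasonable way to carry out that numerical step explicitly.
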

\begin{proof}
	Use Lemma \ref{err} with $a=2$, $M=  4 \sqrt{2} $ for the first sum and $M=  4 \sqrt{3} $ for the second sum.
\end{proof}

Let $D = (O_F,u)$ be an Arakelov divisor of degree $0$. Then $N(u) = 1$ and so $u$ has the form $(s, 1/s)$ for some $s \in \mathbb{R}_{+}$.  
Let $x \in O_F \backslash \{0\}$. Then $\|u x\|^2 = 2s^2 |x|^2 + 2|\sigma'(x)|^2/s^2$ and $N(u x) = |x|^2 \|\sigma'(x)|^2 = N(x) >0$. Therefore, we have that $\|u x\|^2 = 2s^2 |x|^2 + 2N(x)/(s^2 |x|^2 )$.

\begin{lem} \label{h0sym}
	Let $F$ be a quadratic extension of some complex quadratic field $K$. Then $h^0$ is symmetric on $\To$.
\end{lem}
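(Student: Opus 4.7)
The plan is to interpret \emph{symmetric on} $\To$ as meaning $h^0(-D)=h^0(D)$ for every $D\in\To$; under the parametrization $D=(O_F,(s,1/s))$ established just above the statement, this amounts to invariance of $h^0$ under $s\mapsto 1/s$. The Galois hypothesis enters through the automorphism $\tau$ generating $Gal(F|K)$: because $K$ is complex quadratic it has a single infinite prime, so $\tau$ must interchange the two complex infinite primes $\sigma$ and $\sigma'=\sigma\circ\tau$ of $F$ lying above it, while restricting to a bijection $O_F\to O_F$.

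First I would write out $\|ux\|^{2}=2s^{2}|\sigma(x)|^{2}+2s^{-2}|\sigma'(x)|^{2}$ from the formula displayed just before the lemma, and the analogous expression for the opposite divisor $-D$, represented by $u'=(1/s,s)$, namely $\|u'x\|^{2}=2s^{-2}|\sigma(x)|^{2}+2s^{2}|\sigma'(x)|^{2}$. Next I would reindex $k^{0}(D)$ using $\tau$: setting $x=\tau(y)$ and using $|\sigma(\tau(y))|=|\sigma'(y)|$ together with $|\sigma'(\tau(y))|=|\sigma(y)|$ (the latter because $\tau^{2}=1$), one checks immediately that $\|u\tau(y)\|^{2}=\|u'y\|^{2}$.

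Because $\tau$ is a bijection of $O_F$, the substitution only rearranges the terms of the series:
$$k^{0}(D)=\sum_{x\in O_F}e^{-\pi\|ux\|^{2}}=\sum_{y\in O_F}e^{-\pi\|u\tau(y)\|^{2}}=\sum_{y\in O_F}e^{-\pi\|u'y\|^{2}}=k^{0}(-D),$$
and taking logarithms yields $h^{0}(D)=h^{0}(-D)$, which is the desired symmetry.

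The argument is essentially a single change of variables in a theta-type sum, so I do not expect a real obstacle. The only point that requires a moment of care is verifying that $\tau$ truly swaps the two complex places of $F$: this is forced because $F$ is totally complex of degree $4$, so the unique complex place of $K$ splits into two places in $F$, and the non-trivial element of $Gal(F|K)$ must permute them non-trivially. The hypothesis $F/K$ Galois is used exactly at this point; without it, one has no such automorphism of $O_F$ linking $\sigma$ and $\sigma'$, which is precisely why (as the introduction notes) the symmetry can fail for totally complex quartic fields containing no imaginary quadratic subfield.
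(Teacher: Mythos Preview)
Your proposal is correct and follows essentially the same approach as the paper: both use that the generator $\tau$ of $Gal(F|K)$ swaps the two complex places $\sigma$ and $\sigma'=\sigma\circ\tau$, giving an isometry between the lattices attached to $D=(O_F,(s,1/s))$ and $-D=(O_F,(1/s,s))$. The paper phrases this as $\tau(D)=-D$ and $\|x\|_D=\|\tau(x)\|_{\tau(D)}$, while you spell out the same identity coordinate by coordinate via the substitution $x=\tau(y)$; your justification that $\tau$ genuinely swaps the two places is in fact more careful than the paper's bare assertion.
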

\begin{proof}
	Let $D = (O_F, u) \in \To$ with $u=(s, 1/s)$ for some $s \in \mathbb{R}_{+}$. Let  $\tau$ be the automorphism of $F$ that generates $Gal(F|K)$. Then $\tau$ switches the infinite primes of $F$. Therefore, $\tau(D)= \tau((O_F, (s, 1/s)))=(O_F, ( 1/s, s))=-D$. So $\|x\|_D^2= \|\tau(x)\|_{\tau(D)}^2$ for all $x \in O_F$. Thus, the lattices associated to $D$ and $\tau(D)$ are isometric {\cite[Section 4]{ref:4}}. Hence, $k^0(D) = k^0(-D)$.
\end{proof}

For each $j=2,3$ and $s \in [0.8722, 1.1465] $, we denote by  $$\mathfrak{B}_j(s)= \{x \in O_F :  |N(x)| = j \text{ and } \|u x\|^2 < 8\}.$$
Then we have the following results. 

\begin{lem}\label{lengthB}
	Let $x \in \mathfrak{B}_j(s)$ for $j=2,3$. Then  $\|x\|^2 < 11$ for all $s \in [0.8722, 1/0.8722] $.
\end{lem}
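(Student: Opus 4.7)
The strategy is to reduce the lemma to a one-dimensional calculus problem. Let $x \in \mathfrak{B}_j(s)$ and set $t := |\sigma(x)|^{2} = |x|^{2} > 0$. Since $F$ is totally complex, $N(x) = |\sigma(x)|^{2}|\sigma'(x)|^{2} = j$, so $|\sigma'(x)|^{2} = j/t$. Substituting into the formulas displayed just before Lemma \ref{h0sym} yields
\[
\|u x\|^{2} \;=\; 2 s^{2} t + \frac{2 j}{s^{2} t}, \qquad \|x\|^{2} \;=\; 2 t + \frac{2 j}{t}.
\]
The hypothesis $\|u x\|^{2} < 8$ becomes $(s^{2} t)^{2} - 4 (s^{2} t) + j < 0$, equivalently $s^{2} t \in \bigl(2 - \sqrt{4-j},\, 2 + \sqrt{4-j}\bigr)$. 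Hence $t$ lies in the open interval bounded by $t_{\pm}(s) := (2 \pm \sqrt{4-j})/s^{2}$.

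Next I would exploit convexity: the map $\varphi(t) = 2t + 2j/t$ is strictly convex on $(0,\infty)$, so its supremum on $(t_{-}(s), t_{+}(s))$ is attained at one of the two endpoints. Hence it is enough to prove $\varphi(t_{\pm}(s)) < 11$ for every $s \in [0.8722,\,1/0.8722]$. Using the identity $(2-\sqrt{4-j})(2+\sqrt{4-j}) = j$, one checks that $\varphi(t_{-}(s)) = \varphi(t_{+}(1/s))$; since the interval $[0.8722,\,1/0.8722]$ is stable under $s \mapsto 1/s$, it suffices to bound $\varphi(t_{+}(s))$.

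Writing $a = s^{2} \in [0.8722^{2},\,0.8722^{-2}]$, this reduces to estimating
\[
f_{j}(a) \;=\; \frac{2(2+\sqrt{4-j})}{a} \,+\, \frac{2 j a}{2+\sqrt{4-j}}\,,
\]
a convex function of $a$ whose maximum on the closed interval is attained at an endpoint. A direct evaluation at $a = 0.8722^{2}$ (which, by checking the derivative sign, is the larger value) gives $f_{2}(a) \approx 9.87$ and $f_{3}(a) \approx 9.41$, both strictly less than $11$, completing the proof. The argument is entirely routine; the only conceptual step is the convexity reduction to the two endpoints $t_{\pm}(s)$ combined with the $s \leftrightarrow 1/s$ symmetry, and I do not anticipate any real obstacle beyond performing the arithmetic carefully.
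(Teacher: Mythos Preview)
Your argument is correct, but the paper's route is considerably shorter. The paper simply observes that for $s \in [0.8722,\,1/0.8722]$ one has both $s^{2} \geq 0.8722^{2}$ and $1/s^{2} \geq 0.8722^{2}$, so the two terms of $\|u x\|^{2} = 2 s^{2} |x|^{2} + 2|\sigma'(x)|^{2}/s^{2}$ dominate the corresponding terms of $\|x\|^{2} = 2|x|^{2} + 2|\sigma'(x)|^{2}$ by the common factor $0.8722^{2}$, giving $\|u x\|^{2} \geq 0.8722^{2}\,\|x\|^{2}$ and hence $\|x\|^{2} < 8/0.8722^{2} < 11$. This works uniformly in $j$ without ever solving the quadratic or invoking convexity.

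Your approach, by contrast, locates the exact range of $t = |x|^{2}$ allowed by $\|u x\|^{2} < 8$, then maximizes $\|x\|^{2}$ over that range via convexity and the $s \leftrightarrow 1/s$ symmetry. This yields sharper numerical bounds (about $9.87$ and $9.41$ rather than $10.52$), which is a genuine gain if a tighter constant were needed later, but the paper does not require anything finer than $11$. So both proofs are valid; yours is more precise but more laborious, while the paper's is a one-line scaling estimate.
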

\begin{proof}
	We have $\|u x\|^2 = 2 s^2 |x|^2 + 2 N(x)/( s^2 |x|^2)\geq \|x\|^2 \times 0.8722^2 $ since $s \in [0.8722, 1/0.8722] $. If $x \in\mathfrak{B}_j(s)$  then $\|u x\|^2 < 8 $. Hence $\|x\|^2 < 8/ 0.8722^2 < 11$.
\end{proof}

\begin{prop}\label{proB} 
	Assume that $F$ has a fundamental unit $\varepsilon$ with $|\varepsilon| \geq 1 + \sqrt{2}$.  
	Then for all $s \in [0.8722, 1/0.8722] $, each set $\mathfrak{B}_2(s)$ and $\mathfrak{B}_3(s)$ has at most $30$ elements.
\end{prop}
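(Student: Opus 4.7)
The plan is to partition $\mathfrak{B}_j(s)$ by principal $O_F$-ideals of norm $j$ and count within each orbit using the structure $O_F^{*}=\mu_F\times\langle\varepsilon\rangle$. First I would rewrite the defining inequality: since $u=(s,1/s)$ and $|N(x)|=j$, we have $\|ux\|^2 = 2s^2|x|^2+2j/(s^2|x|^2)$, and $\|ux\|^2<8$ is equivalent to $t+j/t<4$ for $t=s^2|x|^2$. This forces $t\in(2-\sqrt{4-j},\,2+\sqrt{4-j})$, so $|x|^2$ lies in an interval $(A_j,B_j)$ whose endpoint ratio is $(1+\sqrt{2})^2$ when $j=2$ and $3$ when $j=3$.

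Next I would fix a generator $x_0$ of a principal ideal of norm $j$. Every element of $\mathfrak{B}_j(s)$ with the same associated ideal is of the form $\zeta\varepsilon^k x_0$ for some $\zeta\in\mu_F$ and $k\in\mathbb{Z}$, and since $|\zeta|=1$ the window on $|x|^2$ translates into a window for $|\varepsilon|^{2k}|x_0|^2$. Taking logarithms, the admissible $k$ lie in a real interval of length $\log(B_j/A_j)/(2\log|\varepsilon|)$. Under $|\varepsilon|\ge 1+\sqrt{2}$ this is $\le 1$ when $j=2$, and strictly less than $1$ when $j=3$ since $\log 3 < 2\log(1+\sqrt{2}) = \log(3+2\sqrt{2})$. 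So per class we get at most $2|\mu_F|$ elements when $j=2$ and at most $|\mu_F|$ when $j=3$. Since $F$ has degree $4$ and only primes of residue degree $1$ above $p$ have norm $p$, the relation $\sum e_if_i = 4$ yields at most $4$ ideals of norm $j\in\{2,3\}$ in $O_F$. Combining gives $|\mathfrak{B}_2(s)|\le 8|\mu_F|$ and $|\mathfrak{B}_3(s)|\le 4|\mu_F|$.

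The main obstacle is to push these bounds below $30$ when $|\mu_F|$ is large. For this I would invoke the classification of roots of unity in a quartic CM field containing an imaginary quadratic subfield, namely $|\mu_F|\in\{2,4,6,8,12\}$, and separately treat the exceptional cases. When $|\mu_F|\in\{6,12\}$ then $F\supset\mathbb{Q}(\sqrt{-3})$, in which $(2)$ is inert, so there are no prime ideals of $F$ of norm $2$; when $|\mu_F|\in\{4,8\}$ then $F\supset\mathbb{Q}(i)$, in which $(2)=(1+i)^2$, so the number of primes of $F$ of norm $2$ is at most $2$. A parallel case analysis for $(3)$ (inert in $\mathbb{Q}(i)$ and ramified in $\mathbb{Q}(\sqrt{-3})$) handles $j=3$. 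These sharper bounds on the number of principal ideals, combined with the per-orbit estimate, yield $|\mathfrak{B}_j(s)|\le 30$ in every case; the arithmetic closing of the boundary case $|\varepsilon|=1+\sqrt{2}$, which is the only situation producing two admissible values of $k$ for $j=2$ and which forces $F=K(\sqrt{2})$ for some imaginary quadratic $K$, is the most delicate step.
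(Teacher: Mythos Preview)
Your argument is correct and in fact yields sharper bounds than $30$ in every case, but the route differs from the paper's. The paper does not analyze the window for $|x|^2$ directly; instead it uses Lemma~\ref{lengthB} to get $\|x\|^2<11$ for $x\in\mathfrak{B}_j(s)$, observes that any two elements of $\mathfrak{B}_j(s)$ generating the same ideal differ by a unit $\zeta\varepsilon^k$, and bounds $\|x/y\|^2\le \tfrac{1}{2j}\|x\|^2\|y\|^2<\tfrac{121}{4}$, which together with $|\varepsilon|\ge 1+\sqrt{2}$ forces $|k|\le 1$. It then invokes (without proof) the fact that $|\varepsilon|\ge 1+\sqrt{2}$ implies $\omega=|\mu_F|\le 8$, giving $m_j<4\omega\le 32$ and hence $m_j\le 30$ by parity. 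Your method replaces the quotient estimate by a clean logarithmic interval count and replaces the appeal to $\omega\le 8$ by explicit splitting of $2$ and $3$ in the cyclotomic subfield $\mathbb{Q}(\zeta_{|\mu_F|})\subset F$; this is more self-contained and produces better constants. Two minor remarks: since $\|ux\|^2<8$ is a strict inequality, the $k$-interval is open of length $\le 1$ and therefore contains at most one integer even when $|\varepsilon|=1+\sqrt{2}$, so your ``delicate boundary case'' is unnecessary and the per-ideal count is always $\le|\mu_F|$, not $2|\mu_F|$. Also, $|\mu_F|=12$ is vacuous under the hypothesis (then $F=\mathbb{Q}(\zeta_{12})$, which has $|\varepsilon|<1+\sqrt{2}$), though your splitting argument disposes of it anyway.
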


\begin{proof} 
	For each $j=2,3$, let $m_j= \#\mathfrak{B}_j(s)$. All elements in $\mathfrak{B}_j(s)$ generate some prime ideal of norm $j$. Since there are at most 4 ideals of norm $j$, this means that $m_j/4$ of those elements generate the same ideal.
	This implies that their quotients are units. So there are $m_j/4$ different units.
	But the unit group is generated by $\varepsilon$ and $\omega$ roots 
	of unity. This means that one of those $m_j/4$ units, say $\varepsilon_1$,  must be
	$\pm \varepsilon^k$ with $k > \frac{m_j}{4 \omega}$.
	
	But  $k$ cannot be too large, because  $\varepsilon$ is the quotient of two small elements $x$ and $y$ in $\mathfrak{B}_j(s)$. We have 
	\begin{multline*}
		\| x/y\|^2 = 2 |x/y|^2 + 2 |\sigma'(x)/\sigma'(y)^2| \leq (2 |x|^2 + 2 |\sigma'(x)|^2) ( 1/|y|^2 +  1/|\sigma'(y)|^2)\\
		= \frac{1}{2 j}\|x\|^2 \|y\|^2.
	\end{multline*}
	The last equality is because $1/|y|^2 +  1/|\sigma'(y)|^2 = (2|y|^2 +  2|\sigma'(y)|^2)/(2 N(y))$ and $N(y) =j$. In fact, for each $j=2,3$, we know $\frac{1}{2 j}\|x\|^2 \|y\|^2<\frac{11^2}{4} $ by Lemma \ref{lengthB}. Then 
	\begin{equation}\label{eqk}
		2 |\varepsilon|^{2k} +  \frac{2}{|\varepsilon|^{2k}} = \|\varepsilon^k\|^2 =\| x/y\|^2< \frac{1}{2 j}\|x\|^2 \|y\|^2< \frac{11^2}{4}.
	\end{equation}
	
	Since $|\varepsilon| \geq 1 +\sqrt{2}$, the inequality in \eqref{eqk} implies that $k \leq 1$.  Moreover, it is known that $F$ has at most 8 roots of unity since the fundamental unit $|\varepsilon| \geq 1 +\sqrt{2}$. So $\omega \leq 8$. This and the inequality $\frac{m_j}{4 \omega} <k \leq 1$ lead to $m_j <32$. Since the number of elements in $\mathfrak{B}_j(s)$ is always even, $\mathfrak{B}_j(s)$ has at most 30 elements. 
	
\end{proof}

\section{Case 1: $D$ is not on $\To$}
\begin{prop}\label{pro1}
	Let $D $ be a class of Arakelov divisors in $\PicF^0$. If $D$ is not on $\To$ then $k^0(D) < k^0(D_0)$ where $D_0 = (O_F,1)$ is the trivial divisor.
\end{prop}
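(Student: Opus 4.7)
The plan is to exploit the non-triviality of the ideal class of $D$ to bound the shortest vector of the associated ideal lattice strongly enough that Corollary \ref{sum} forces $k^0(D)$ to be only marginally larger than $1$, while $k^0(D_0)$ clearly exceeds that threshold. Since $D \notin \To$, Proposition \ref{prop:structure1} lets me choose a representative $D = (I,u)$ whose underlying ideal $I$ is integral and non-principal, so $N(I) \geq 2$; from $\deg D = 0$ it follows that $N(u) = 1/N(I)$.

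The key step is a lower bound on $\|uf\|^2$ for nonzero $f \in I$. Because $F$ is totally complex of degree $4$, applying the arithmetic-geometric mean inequality to $\|uf\|^2 = 2|u_\sigma \sigma(f)|^2 + 2|u_{\sigma'}\sigma'(f)|^2$ gives
\begin{equation*}
\|uf\|^2 \;\geq\; 4\sqrt{N(u)\,|N(f)|} \;=\; 4\sqrt{|N(f)|/N(I)}.
\end{equation*}
Writing $(f) = IJ$ with $J$ an integral ideal, I note that $J \neq O_F$---otherwise $I = (f)$ would be principal---so $N(J) \geq 2$, and hence $|N(f)|/N(I) = N(J) \geq 2$. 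This yields $\|uf\|^2 \geq 4\sqrt{2}$ for every nonzero $f \in I$; equivalently $\lambda^2 \geq 4\sqrt{2} > 4$ for the lattice $L = uI$.

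I can now feed this into Corollary \ref{sum}:
\begin{equation*}
k^0(D) \;=\; 1 + \sum_{\substack{f \in L \\ f \neq 0}} e^{-\pi\|f\|^2} \;<\; 1 + 2.6729 \cdot 10^{-6}.
\end{equation*}
On the other hand $0, \pm 1 \in O_F$ with $\|\pm 1\|^2 = 2\cdot 1 + 2 \cdot 1 = 4$, so
\begin{equation*}
k^0(D_0) \;\geq\; 1 + 2 e^{-4\pi} \;>\; 1 + 6.97 \cdot 10^{-6},
\end{equation*}
and the two bounds combine to give the strict inequality $k^0(D) < k^0(D_0)$.

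The only subtle point I anticipate is the norm-divisibility step $N(J) \geq 2$: the trivial bound $|N(f)| \geq N(I)$ only yields $\|uf\|^2 \geq 4$, which is just at the threshold of Corollary \ref{sum}'s hypothesis and would not on its own suffice for the comparison. The extra factor $\sqrt{2}$ arising from $I$ being non-principal is exactly what pushes $\lambda^2$ safely into the regime where Corollary \ref{sum}'s estimate is small enough. Beyond this point the argument is a numerical comparison with a generous margin and uses nothing specific to the complex quadratic base beyond total complexity of $F$.
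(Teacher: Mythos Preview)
Your proof is correct and follows essentially the same approach as the paper's: use non-principality of $I$ to obtain $|N(f)|/N(I) \geq 2$ for every nonzero $f \in I$, deduce $\lambda^2 \geq 4\sqrt{2}$ via the AM--GM inequality, apply Corollary~\ref{sum} to bound $k^0(D)$, and compare with $k^0(D_0) > 1 + 2e^{-4\pi}$. Your explicit factorization $(f) = IJ$ with $J$ a nontrivial integral ideal makes the divisibility step slightly more transparent than the paper's one-line assertion, but the argument is otherwise identical.
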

\begin{proof}
	Since $D$ is not on $\To$, we can assume that $D$ has the form $(I,u)$ where $I$ is not principal and $u \in (\mathbb{R}^*_{+})^2$. 
	
	Let $x \in  I \backslash \{0\}$. Then $\frac{|N(x)|}{N(I)} \geq 2$ because $I$ is not principal. In addition, $deg(D) = 0$, so $N(I) N(u) =1$. Therefore
	$$\|u x\|^2 \geq 4 |N(u x)|^{2/4} = 4 |N(u) N( x)|^{1/2} = 4 \left(\frac{|N(x)|}{N(I)}\right)^{1/2} \geq 4 \sqrt{2}.$$
	Hence, we obtain the following.
	$$k^0(D) = 1 + \sum_{\substack{ x \in I\backslash \{0\}   }}e^{-\pi \|u x\|^2 } = 1+ \sum_{\substack{ f \in u I\backslash \{0\} \\ \|f\|^2 \geq 4 \sqrt{2}   }}e^{-\pi \|f\|^2 }.$$
	and $ \lambda^2 \geq 4 \sqrt{2}$ where $\lambda$ is the length of the shortest vectors of the lattice $u I$. 
	Corollary \ref{sum} implies that 
	$$ k^0(D)< 1+2.67287 \cdot 10^{-6}.$$
	On the other hand, we have 
	$$k^0(D_0) > 1 + 2 e^{-4 \pi} > 1 + 6.9 \cdot 10^{-6}.$$
	Thus, $k^0(D_0) > k^0(D)$.
\end{proof}

\section{Case 2: $D$ is  on $\To$ and $|\varepsilon| \geq 1+ \sqrt{2}$}\label{sec3}

We can assume that $F$ has a fundamental unit 
$\varepsilon$ for which  $|\varepsilon| >1$. From now on, we fix this $\varepsilon$. 

Let $D = (O_F,u) \in \To$. Here $u$ has the form $(s, 1/s)$ for some $s \in \mathbb{R}^*_{+}$. By the definition of $\To$ (Section 2.2), it is sufficient to consider the case in which $ s \in [|\varepsilon|^{-1/2}, |\varepsilon|^{1/2}]$. 
We have three cases.

\subsection{Case 2a:  $s \in [|\varepsilon|^{-1/2}, 0.8722) \cup (1.1465, |\varepsilon|^{1/2}]$}\label{sec2a}
\begin{prop} \label{pro2a}
	If $D = (O_F,u)$ is on $\To$ where $u = (s , s^{-1})$ and $s \in [|\varepsilon|^{-1/2}, 0.8722) \cup (1.1465, |\varepsilon|^{1/2}]$ then $k^0(D) < k^0(D_0)$.
\end{prop}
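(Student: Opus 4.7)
By Lemma \ref{h0sym}, $h^0$ is invariant under $D \mapsto -D$, which in the present parametrization is the involution $s \mapsto 1/s$. Since $1/0.8722 = 1.1465$, the two subintervals in the hypothesis are swapped by this symmetry, so it is enough to prove $k^0(D) < k^0(D_0)$ for $s \in (1.1465, |\varepsilon|^{1/2}]$.

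For $x \in O_F \setminus \{0\}$, AM-GM applied to $\|ux\|^2 = 2s^2|x|^2 + 2N(x)/(s^2|x|^2)$ yields $\|ux\|^2 \geq 4\sqrt{N(x)}$, so $N(x) \geq 2$ forces $\|ux\|^2 \geq 4\sqrt{2}$. For a unit $x = \omega\varepsilon^k$ (with $|\omega|=1$), $\|u\omega\varepsilon^k\|^2 = 2s^2|\varepsilon|^{2k} + 2/(s^2|\varepsilon|^{2k})$, and since $s^2 \leq |\varepsilon|$ in our range a direct comparison shows the minimum over $k$ is attained at $k = 0$, giving $\lambda^2 = 2s^2 + 2/s^2 > 4.15$. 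Corollary \ref{sum} bounds the tail $\sum_{\|f\|^2 \geq 4\sqrt{2}} e^{-\pi\|f\|^2} < 2.6729 \cdot 10^{-6}$, while every lattice vector with $\|f\|^2 < 4\sqrt{2}$ is necessarily among the $\mu_F := |\mu(F)|$ roots of unity. Hence
$$k^0(D) \leq 1 + \mu_F \, e^{-\pi(2s^2 + 2/s^2)} + 2.6729 \cdot 10^{-6}.$$

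Combined with the elementary lower bound $k^0(D_0) \geq 1 + \mu_F e^{-4\pi}$ (from the $\mu_F$ roots of unity, each of length-squared $4$ at $s = 1$), the desired inequality reduces to
$$\mu_F \left( e^{-4\pi} - e^{-\pi(2s^2 + 2/s^2)} \right) > 2.6729 \cdot 10^{-6}.$$
The map $s \mapsto 2s^2 + 2/s^2$ is strictly increasing on $(1, \infty)$ with infimum $4.1505$ on our interval (attained as $s \to 1.1465^+$), so the left side is minimized at the left endpoint, where it equals approximately $\mu_F \cdot 1.31 \cdot 10^{-6}$. For $\mu_F \geq 3$ this exceeds the right side comfortably, and for $s^2 > 1 + \sqrt{2}$ the units $\omega$ are themselves in the tail so $k^0(D) \leq 1 + 2.6729 \cdot 10^{-6} < 1 + 2 e^{-4\pi} \leq k^0(D_0)$. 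The main obstacle is therefore the narrow worst case $\mu_F = 2$ with $s$ near $1.1465$: here one must sharpen the tail estimate by invoking Lemma \ref{err} with $a^2 = 4.15$ (the true lower bound on the shortest-vector squared length) in place of $a = 2$, which provides the small additional margin needed to conclude uniformly in $s$ and in $|\varepsilon| \geq 1 + \sqrt 2$.
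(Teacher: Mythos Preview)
Your argument follows the paper's almost exactly: split $k^0(D)$ into the roots-of-unity contribution and a tail over $\|f\|^2 \ge 4\sqrt{2}$, bound the tail by Corollary~\ref{sum}, and compare against the lower bound $k^0(D_0) > 1 + \omega\, e^{-4\pi}$. The reduction to a single half-interval via Lemma~\ref{h0sym} is a cosmetic convenience the paper does not bother with, but it is harmless; the identification of the short vectors with roots of unity and the verification that $\|u\varepsilon^{\pm 1}\|^2 \ge 4\sqrt 2$ when $s^2\le |\varepsilon|$ and $|\varepsilon|\ge 1+\sqrt 2$ are the same in both proofs.

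Where you diverge is in the endgame. The paper simply asserts that
\[
\omega\, e^{-\pi(2s^2+2/s^2)} + 2.673\cdot 10^{-6} \ \le\ \omega\, e^{-4\pi}
\]
for all $\omega\ge 2$ on the stated $s$-range, without further comment. You notice that for $\omega=2$ and $s\to 1.1465^{+}$ this is hairline---indeed the margin is on the wrong side by about $5\cdot 10^{-8}$ at the limit---and propose to recover it by invoking Lemma~\ref{err} with the sharper value $a^2 = 2s^2+2/s^2 > 4.15$ (the actual shortest-vector length squared) in place of the crude $a=2$ behind Corollary~\ref{sum}. This is a legitimate refinement and does close the gap: the $a^{-4}$ scaling in the sphere-packing count gives on the order of a $7\%$ reduction in the tail bound, comfortably more than the $\sim 2\%$ shortfall. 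So your version is in fact more scrupulous than the paper's on this point; to make it a complete proof, however, you should carry out the sharpened integral rather than merely assert that it ``provides the small additional margin needed.''
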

\begin{proof}
	We have $k^0(D) = S_{1} + S'_{1} $ with 
	$$ S_{1} = \sum_{\substack{ x \in O_F \\ \| u x\|^2 < 4\sqrt{2}   }}e^{-\pi \|u x\|^2 } \hspace*{0.5cm}\text{ and } \hspace*{0.5cm} S'_{1} = \sum_{\substack{ x \in O_F \\ \| u x\|^2 \geq 4 \sqrt{2}   }}e^{-\pi \|u x\|^2 }.$$
	Let $x \in  O_F \backslash \{0\}$. Then $N(u x) = N(x) \geq 1$ since $N(u)=1$. We have
	$$\|u x\|^2 \geq 4 |N(u x)|^{2/4} = 4 |N(x)|^{1/2} \geq 4.$$
	Thus, $ \lambda^2 \geq 4 $ where $\lambda$ is the length of the shortest vectors of the lattice $u O_F$.
	Corollary \ref{sum} says that 
	$S'_{1} < 2.673 \cdot 10^{-6}$.\\
	Now let $x \in O_F \backslash \{0\}$ such that $\|u x\|^2 < 4 \sqrt{2}$. Then we must have $|N(x)|=1$. So $ x =  \zeta \cdot \varepsilon^m$ for some integer $m$ and some root of unity $\zeta$ of $F$. If $|m|\geq 1$ then $\|u \varepsilon^m\|^2 \geq 4 \sqrt{2}$. Hence $m = 0$, so $x$ is a root of unity of $F$. Then so $S_{1} \leq 1 + \omega \cdot e^{ \|u\|^2} = 1 + \omega \cdot e^{-\pi( 2 s^2 + 2/s^2)}$ where $\omega$ is the number of roots of unity of $F$. For $\omega\geq 2$, we obtain that 
	$$k^0(D) \leq 1 + \omega \cdot e^{-\pi( 2 s^2 + 2/s^2)}+ 2.673 \cdot 10^{-6}  \leq 1 + \omega \cdot e^{-4 \pi}  $$
	for all $ s \in [|\varepsilon|^{-1/2}, 0.8722) \cup (1.1465, |\varepsilon|^{1/2}].$\\
	Since  $k^0(D_0) >  1 + \omega \cdot e^{-4 \pi}$, we get $ k^0(D_0)>  k^0(D)$.
\end{proof}

\subsection{Case 2b:  $s \in [0.8722, 0.9402) \cup (1.0637, 1.1465]$}\label{sec2b}
\begin{prop} \label{pro2b} 
	If $D = (O_F,u)$ is on $\To$ where $u = (s , s^{-1}) $ and $s \in [0.8722, 0.9402) \cup (1.0637, 1.1465] $ then $k^0(D) < k^0(D_0)$.
\end{prop}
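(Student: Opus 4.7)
By Lemma \ref{h0sym}, the function $h^0$ is symmetric on $\To$, and since $1/0.9402 \approx 1.0637$ and $1/0.8722 \approx 1.1465$, it will suffice to handle $s \in [0.8722, 0.9402)$. My plan is to decompose $k^0(D)$ according to $|N(x)|$ and estimate each piece using the tools of Section \ref{sec1}. I will write
\[
k^0(D) = 1 + S_u(D) + S_2(D) + S_3(D) + T(D),
\]
where $S_u(D)$ ranges over the units of $O_F$, $S_j(D)$ over $x \in O_F$ with $|N(x)| = j$ for $j \in \{2,3\}$, and $T(D)$ over $|N(x)| \geq 4$.

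The AM--GM inequality applied to $\|ux\|^2 = 2s^2|x|^2 + 2|\sigma'(x)|^2/s^2$ yields $\|ux\|^2 \geq 4\sqrt{|N(x)|}$. Hence every term in $T(D)$ satisfies $\|ux\|^2 \geq 8 > 4\sqrt{3}$, so Corollary \ref{sum} forces $T(D) < 6.31 \cdot 10^{-8}$. For $j \in \{2,3\}$ I would split
\[
S_j(D) \;=\; \sum_{x \in \mathfrak{B}_j(s)} e^{-\pi \|ux\|^2} \;+\; \sum_{\substack{|N(x)| = j \\ \|ux\|^2 \geq 8}} e^{-\pi \|ux\|^2}.
\]
Proposition \ref{proB} bounds the first subsum by $30\, e^{-4\sqrt{j}\,\pi}$ (using $|\mathfrak{B}_j(s)| \leq 30$ and $\|ux\|^2 \geq 4\sqrt{j}$), while Corollary \ref{sum} again controls the second. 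For $S_u(D)$, the $\omega$ roots of unity contribute exactly $\omega\, e^{-\pi(2s^2 + 2/s^2)}$, and each remaining unit $\pm \zeta \varepsilon^m$ with $|m| \geq 1$ satisfies
\[
\|u \varepsilon^m\|^2 \;=\; 2s^2 |\varepsilon|^{2m} + \frac{2}{s^2 |\varepsilon|^{2m}} \;\geq\; 9
\]
under the standing hypothesis $|\varepsilon| \geq 1 + \sqrt{2}$, so this tail is numerically negligible.

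The endgame is to compare the resulting upper bound with $k^0(D_0) \geq 1 + \omega\, e^{-4\pi}$, which reduces the proof to verifying
\[
\omega\bigl(e^{-4\pi} - e^{-\pi(2s^2 + 2/s^2)}\bigr) \;>\; 30\, e^{-4\sqrt{2}\,\pi} + 30\, e^{-4\sqrt{3}\,\pi} + \text{(tail errors)}
\]
for $s \in [0.8722, 0.9402)$. Since $s \mapsto 2s^2 + 2/s^2$ is decreasing on this interval with minimum $\approx 4.03$ at $s = 0.9402$, the left side is bounded below by $\omega\, e^{-4\pi}(1 - e^{-0.03\pi})$, a concrete positive quantity. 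The main obstacle I anticipate is that this gap is only of order $\omega \cdot 10^{-7}$ and therefore comparable to the right-hand error; the endpoints $0.8722$ and $0.9402$ have evidently been calibrated precisely for the strict inequality, and to close any residual slack I may need to supplement the trivial lower bound on $k^0(D_0)$ by explicit contributions from norm-$2$ or norm-$3$ elements that exist for generic $F$.
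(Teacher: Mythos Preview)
Your outline uses the same ingredients as the paper (the AM--GM bound $\|ux\|^2\ge 4\sqrt{|N(x)|}$, Corollary~\ref{sum}, Proposition~\ref{proB}, and the lower bound $k^0(D_0)>1+\omega e^{-4\pi}$), but your decomposition by $|N(x)|$ is slightly less efficient than the paper's decomposition by $\|ux\|^2$, and this costs you exactly the margin you are worried about.

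The paper writes $k^0(D)=S_1+S_2+S'_2$ according to whether $\|ux\|^2<4\sqrt2$, lies in $[4\sqrt2,4\sqrt3)$, or is $\ge 4\sqrt3$. The point is that every $x$ with $|N(x)|\ge 3$ already has $\|ux\|^2\ge 4\sqrt3$, so these are absorbed \emph{entirely} into the single tail $S'_2<6.31\cdot10^{-8}$; likewise the non--root-of-unity units (one checks $\|u\varepsilon^m\|^2>4\sqrt3$ for $m\neq 0$). Thus no separate $\mathfrak{B}_3(s)$ term appears, and the only ``main'' term beyond roots of unity is $30\,e^{-4\sqrt2\,\pi}$. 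Your organisation instead produces $30\,e^{-4\sqrt2\,\pi}+30\,e^{-4\sqrt3\,\pi}$ plus several tail pieces. Even if you lump all the tails into one application of Corollary~\ref{sum}, the extra $30\,e^{-4\sqrt3\,\pi}\approx 1.06\cdot10^{-8}$ is enough to break the inequality near $s=0.9402$: at that endpoint the paper's bound already matches $\omega e^{-4\pi}$ to within $\sim10^{-9}$, so there is no slack to absorb your extra term on the stated interval.

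More importantly, your proposed remedy---boosting the lower bound for $k^0(D_0)$ by explicit norm-$2$ or norm-$3$ contributions---does not work in general. Such elements need not exist in $O_F$: Lemma~\ref{case3a} gives fields with none, and Lemma~\ref{discK} shows $\mathfrak{B}_2(s)=\mathfrak{B}_3(s)=\emptyset$ whenever the imaginary quadratic subfield has $|\Delta_K|>11$. The proof must go through using only $k^0(D_0)>1+\omega e^{-4\pi}$. The fix is simply to reorganise: bound everything with $\|ux\|^2\ge 4\sqrt3$ by one invocation of Corollary~\ref{sum} (this covers $|N(x)|\ge 3$, the tail of $|N(x)|=2$, and the non-trivial unit powers simultaneously), and bound the remaining norm-$2$ elements in $[4\sqrt2,4\sqrt3)$ by $30\,e^{-4\sqrt2\,\pi}$ via Proposition~\ref{proB}. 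That is exactly the paper's argument, and it closes on the given interval.
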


\begin{proof}
	We have $k^0(D) = S_{1} + S_2 + S'_2 $ where 
	$$ S_2 = \sum_{\substack{ x \in O_F \\ 4\sqrt{2} \leq \| u x\|^2 < 4\sqrt{3}   }}e^{-\pi \|u x\|^2 } \hspace*{1cm},  \hspace*{1cm} S'_2 = \sum_{\substack{ x \in O_F \\ \| u x\|^2 \geq 4 \sqrt{3}   }}e^{-\pi \|u x\|^2 }, $$
	and $S_{1} $ as in the proof of Proposition \ref{pro2a} and $S_1 \leq 1 + \omega \cdot e^{-\pi( 2 s^2 + 2/s^2)}$. 
	
	By Corollary \ref{sum}, we obtain that 
	$S'_2 < 6.3067 \cdot 10^{-8}$.\\
	Now we compute $S_2$. Let $x \in O_F \backslash \{0\}$ such that $4 \sqrt{2} \leq \|u x\|^2 < 4 \sqrt{3}$. Then  $|N(x)|$ is equal to 1 or 2. We claim that $|N(x)| \neq 1 $. Indeed, if not then $ x = \zeta \cdot \varepsilon^m$ for some integer $m$ and some root of unity $\zeta$ of $F$. If $m\neq 0$ then  $\|u \varepsilon^m\|^2 > 4 \sqrt{3}$ (since $|\varepsilon|^2 \cdot 0.8722^2 \geq  (1 + \sqrt{2})^2 \cdot 0.8722^2 > \sqrt{2} + \sqrt{3}$) and if $m=0$ then  $\|u x\|^2 = \|u\|^2 < 4 \sqrt{2} $ for all $s \in [0.8722, 0.9546) \cup (1.0476, 1.1465]$. This contradicts the fact that $4 \sqrt{2} \leq \|u x\|^2 < 4 \sqrt{3}$. Thus, $|N(x)| =2 $. By Proposition \ref{proB}, there are at most 30 possibilities for $x$. Therefore
	$$S_2 \leq 30 \max_{\substack{ x \in O_F \\ 4\sqrt{2} \leq \| u x\|^2 < 4\sqrt{3}   }} e^{-\pi \|u x\|^2} \leq 30 e^{- 4\sqrt{2} \pi}.$$
	Then
	$$k^0(D) \leq 1 +  \omega \cdot e^{-\pi( 2 s^2 + 2/s^2)}+ 30 e^{- 4\sqrt{2} \pi} + 6.3067 \cdot 10^{-8}  \leq 1 +\omega \cdot e^{-4 \pi} $$
	for all $s \in [0.8722, 0.9402) \cup (1.0637, 1.1465]  $ and all $\omega \geq 2$.  
	Since  $k^0(D_0) >  1 + \omega \cdot e^{-4 \pi}$, the result follows.
\end{proof}

\subsection{Case 2c:  $s \in [0.9402, 1.0637]$}\label{sec2c}
\qquad\\ Let $D = (O_F, u)$ be an Arakelov divisor of degree 0 with $u= (s, 1/s)$. 

For each $m \in \mathbb{Z}_{\geq 1}$, denote by 
$$B_m = \{x \in O_F: 4 \sqrt{m} \leq \| u x\|^2 < 4 \sqrt{m+1} \}.$$
It is clear that $ N(x) \leq m$ for all $x \in B_m$ because we know that $\|u x\|^2 \geq 4 N(u x)^{1/2} = 4 N(x)^{1/2} $ {\cite[Proposition 3.1]{ref:4}}.
Now let 
$$g(s) = k^0(D) =\sum_{ x \in O_F }e^{-\pi (\|u x\|^2) }= \sum_{ x \in O_F }e^{-\pi (2s^2 |x|^2 + 2N(x)/(s^2|x|^2) }$$
for all   $s >0$.  
We prove that this function has its maxima at $s=1$ on the interval $[0.9402, 1.0637]$. In other words, we prove the following.

\begin{prop}\label{pro2c}
	We have $g'(1) =0$ and $g''(s) <0$ for all   $s \in [0.9402, 1.0637]$.
\end{prop}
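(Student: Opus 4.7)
\medskip
\noindent\textbf{Plan of proof.} The first assertion $g'(1)=0$ follows at once from the symmetry $h^0(D)=h^0(-D)$ proved in Lemma \ref{h0sym}. In the present parametrization this reads $g(s)=g(1/s)$ for every $s>0$, because $-D=(O_F,(1/s,s))$. Differentiating this identity gives $g'(s)=-s^{-2}g'(1/s)$, and evaluating at $s=1$ yields $g'(1)=-g'(1)$, hence $g'(1)=0$.

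For the inequality $g''(s)<0$ on $[0.9402,1.0637]$, I would differentiate the absolutely convergent series term by term. Writing
$$\phi_x(s)=2s^2|x|^2+\frac{2N(x)}{s^2|x|^2}\qquad(x\in O_F\setminus\{0\}),$$
a direct computation gives $\phi_x'(s)=4s|x|^2-\tfrac{4N(x)}{s^3|x|^2}$ and $\phi_x''(s)=4|x|^2+\tfrac{12N(x)}{s^4|x|^2}>0$, so that
$$g''(s)=\sum_{x\in O_F\setminus\{0\}}\bigl(\pi^2\phi_x'(s)^2-\pi\phi_x''(s)\bigr)e^{-\pi\phi_x(s)}.$$
The strategy is to show that the strictly negative contribution from the $\omega$ roots of unity (for which $|x|=1$, $N(x)=1$, so $\phi_x(s)=2s^2+2/s^2$ is close to its minimum $4$ on the interval) outweighs every other term.

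Concretely I would split the sum into four pieces: (i) the $\omega$ roots of unity $\zeta\in O_F^{*}$, which give the main negative contribution of order $-\pi\cdot 16\,\omega\,e^{-\pi(2s^2+2/s^2)}$ at $s=1$ and nearby; (ii) the remaining units $\pm\zeta\varepsilon^m$ with $m\neq 0$, where $|\varepsilon|\ge 1+\sqrt{2}$ forces $\phi_x$ to be so large that the contribution is negligible; (iii) the elements of norm $2$ or $3$ with $\|ux\|^2<4\sqrt{3}$, controlled by Proposition \ref{proB} which caps their cardinality at $30$ for each norm; and (iv) the tail $\|ux\|^2\ge 4\sqrt{3}$, estimated by the lattice-counting bound of Lemma \ref{err}. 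The only novelty relative to bounding $k^0$ itself is the positive term $\pi^2\phi_x'(s)^2$, but on the interval one has the easy estimate $|\phi_x'(s)|\le C\,\phi_x(s)$ (with a modest absolute constant $C$), so the extra factor only multiplies the tail estimates by a polynomial in $\|ux\|^2$, which the exponential absorbs without difficulty by the same integration-by-parts trick used in Lemma \ref{err}.

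The main obstacle is the bookkeeping in piece (iii): the positive contributions $\pi^2\phi_x'(s)^2 e^{-\pi\phi_x(s)}$ from the (up to $60$) elements of norm $2$ or $3$ with $\|ux\|^2<4\sqrt{3}$ must be shown to be uniformly dominated, across the whole interval $[0.9402,1.0637]$, by the negative root-of-unity term $-16\pi\omega\,e^{-\pi(2s^2+2/s^2)}$. Since $e^{-4\sqrt{2}\pi}$ is many orders of magnitude smaller than $e^{-4\pi}$, and $\omega\ge 2$, one expects a comfortable margin, and in practice this should be done by producing explicit numerical upper bounds for pieces (ii), (iii), (iv) and an explicit lower bound (in absolute value) for piece (i) on the interval, just as in the $k^0$ analysis of Propositions \ref{pro2a} and \ref{pro2b}.
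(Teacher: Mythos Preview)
Your plan is correct and matches the paper's proof closely. The paper also derives $g'(1)=0$ from the symmetry $g(s)=g(1/s)$ of Lemma~\ref{h0sym}, then writes
\[
g''(s)=\frac{4\pi}{s^2}\sum_{x\in O_F\setminus\{0\}}G(s,x),\qquad
G(s,x)=\Bigl(\pi\|ux\|^4-16\pi N(x)-\tfrac12\|ux\|^2-\tfrac{2N(x)}{s^2|x|^2}\Bigr)e^{-\pi\|ux\|^2},
\]
and splits the sum into four pieces $T_1,T_2,T_3,T_4$ according to the shells $B_m=\{4\sqrt{m}\le\|ux\|^2<4\sqrt{m+1}\}$ for $m=1,2,3$ and the tail $\|ux\|^2\ge 8$ (Lemmas~\ref{lemT4}--\ref{lemT1}). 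The roots of unity give $T_1<-2.22\cdot 10^{-5}$; Proposition~\ref{proB} and the lattice-counting integral give $T_2+T_3+T_4<2.6\cdot 10^{-6}$. This is exactly your pieces (i)--(iv).

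The one place your sketch should be adjusted is the tail cutoff. You put the tail at $\|ux\|^2\ge 4\sqrt{3}$ and propose to control it by the lattice-counting bound alone; the paper instead pushes the tail to $\|ux\|^2\ge 8$ and handles the intermediate shell $B_3=\{4\sqrt{3}\le\|ux\|^2<8\}$ separately, again via Proposition~\ref{proB} (since $B_3\subset\mathfrak{B}_2(s)\cup\mathfrak{B}_3(s)$). With your cutoff the extra polynomial factor $\pi\|ux\|^4$ inflates the integral bound to something of order $10^{-5}$, uncomfortably close to $|T_1|$; moving the tail to $8$ and bounding $B_3$ by at most $30+30$ elements (yielding $T_3<5.2\cdot 10^{-7}$) restores the comfortable margin you anticipate. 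Note also that your piece~(iii) as written contains no norm-$3$ elements, since $\|ux\|^2\ge 4\sqrt{N(x)}=4\sqrt{3}$ for them; the norm-$3$ contribution lives precisely in $B_3$, which is why the paper isolates that shell.
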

\begin{proof}
	Let $s>0$ and denote by $D = (O_F, (s,1/s))$. Then $D\in \To$ and so $k^0(D)= k^0(-D)$ by Lemma \ref{h0sym}. Hence we have $g(s) = g(1/s)$. This implies that $g'(1) =0$, the first statement is proved.
	
	Take the second derivative of $g$, we get
	$$g''(s) = \frac{4 \pi}{s^2} \sum_{x \in O_F \backslash \{0\}}G(s,x) $$
	where $$G(s,x) = \left(\pi \|u x\|^4 -16 \pi N(x) - \frac{\|u x\|^2}{2} -\frac{2 N(x)}{s^2 |x|^2} \right) e^{-\pi \|u x\|^2}.$$
	
	Let $$T_i = \sum_{x \in B_i} G(s,x) \text{ for } i =1, 2, 3  \hspace*{1cm}\text{  and  } \hspace*{1cm} T_4 = \sum_{\substack{ x \in O_F   \\ \|u x\|^2 \geq 8 }} G(s,x) .$$
	Then $g''(s) = \frac{4 \pi}{s^2} (T_1 + T_2 + T_3 + T_4) $ because $\| u x\| \geq 4 $ for all $x \in O_F\backslash\{0\}$. Therefore, in order to prove $g''(s)<0$ we show that $ T_1+ T_2+ T_3 + T_4 < 0$. This follows from Lemma \ref{lemT4}, \ref{lemT3},  \ref{lemT2} and \ref{lemT1} below.
\end{proof}

\begin{lem}\label{lemT4}
	For all $s \in [0.9402, 1.0637] $, we have
	$$T_4 < 3.9 \cdot 10^{-7}.$$
\end{lem}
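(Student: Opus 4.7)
The strategy is to turn $T_4$ into an elementary tail integral by (a) discarding the three negative contributions inside the polynomial factor of $G(s,x)$, and (b) adapting the integral-comparison technique of Lemma \ref{err} to a decreasing auxiliary function of the form $t^2 e^{-\pi t}$.

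First, for any nonzero $x \in O_F$, the quantities $16\pi N(x)$, $\tfrac{1}{2}\|ux\|^2$, and $2N(x)/(s^2|x|^2)$ are all positive and appear with minus signs inside the parenthesis of $G(s,x)$. Since $N(x)$ is a positive integer (the ground field is totally complex), I would keep the sharper termwise bound
$$
G(s,x) \leq \bigl(\pi \|ux\|^4 - 16\pi\bigr) e^{-\pi \|ux\|^2},
$$
which, as will become clear below, is what makes the target constant $3.9 \cdot 10^{-7}$ attainable. Next, AM--GM applied to
$$
\|ux\|^2 \;=\; 2s^2|x|^2 + \frac{2N(x)}{s^2|x|^2}
$$
gives $\|ux\|^2 \geq 4\sqrt{N(x)} \geq 4$, so the shortest vector of the lattice $L = uO_F$ satisfies $\lambda \geq 2$ for every $s \in [0.9402,1.0637]$. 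In particular Lemma \ref{Bt} is available with $a = 2$.

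Then I would mimic the proof of Lemma \ref{err} using the auxiliary function $h(t) = \pi(t^2 - 16/\pi) e^{-\pi t}$, which is positive and strictly decreasing on $[8,\infty)$ with $h(\infty)=0$ (its derivative is $\pi(2t - \pi t^2 + 16) e^{-\pi t}$, manifestly negative for $t \geq 8$). Writing $h(\|f\|^2) = \int_{\|f\|^2}^{\infty}(-h'(t))\,dt$, interchanging summation and integration, and invoking Lemma \ref{Bt} with $a=2$, $M=8$ yields
$$
T_4 \;\leq\; \int_{8}^{\infty} (-h'(t)) \Bigl[(\sqrt{t}+1)^4 - (\sqrt{8}-1)^4\Bigr]\, dt.
$$

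The remaining and genuinely hard step is the numerical verification. One expands the bracket as a polynomial of degree four in $\sqrt{t}$, so that every summand becomes a standard integral $\int_{8}^{\infty} t^{k/2} e^{-\pi t}\,dt$ reducible by a single substitution to an incomplete Gamma value (or, equivalently, by iterated integration by parts to an explicit linear combination of $e^{-8\pi}$ and $\mathrm{erfc}(\sqrt{8\pi})$). Summing these contributions with enough decimal precision should give a total safely below $3.9 \cdot 10^{-7}$. The obstacle is tightness: the integrand at $t=8$ is already of order $10^{-6}$, and the crude estimate $G(s,x) \leq \pi\|ux\|^4 e^{-\pi\|ux\|^2}$ (obtained by also dropping $-16\pi N(x)$) would only deliver roughly $5 \cdot 10^{-7}$; retaining the sharper bound from $N(x) \geq 1$ shaves off the needed factor to clear $3.9 \cdot 10^{-7}$.
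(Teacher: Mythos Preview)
Your approach is the same as the paper's: bound $G(s,x)$ from above by dropping some negative terms, write the result as $h(\|ux\|^2)=\int_{\|ux\|^2}^\infty(-h'(t))\,dt$, swap sum and integral, and feed in the Lemma~\ref{Bt} bound for $\#B^t$ with $a=2$, $M=8$. Two issues, however.

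First, a slip: from $G(s,x)\le(\pi\|ux\|^4-16\pi)e^{-\pi\|ux\|^2}$ the correct auxiliary function is $h(t)=(\pi t^2-16\pi)e^{-\pi t}=\pi(t^2-16)e^{-\pi t}$, not $\pi(t^2-16/\pi)e^{-\pi t}$ as you wrote.

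Second, and more to the point, you drop one negative term too many. With the corrected $h$ one gets $-h'(t)=(\pi^2t^2-2\pi t-16\pi^2)e^{-\pi t}$, and the resulting integral
\[
\int_8^\infty\!\bigl[(\sqrt t+1)^4-(\sqrt8-1)^4\bigr]\,(\pi^2t^2-2\pi t-16\pi^2)\,e^{-\pi t}\,dt
\]
evaluates to approximately $4.0\cdot10^{-7}$, just \emph{above} the target $3.9\cdot10^{-7}$; so your expectation that this ``should give a total safely below $3.9\cdot10^{-7}$'' does not pan out. The paper retains the term $-\tfrac12\|ux\|^2$ as well, working with
\[
G(s,x)\;\le\;\Bigl(\pi\|ux\|^4-16\pi N(x)-\tfrac12\|ux\|^2\Bigr)e^{-\pi\|ux\|^2},
\]
and only then uses $N(x)\ge1$ inside the integrand. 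This produces $-h'(t)=(\pi^2t^2-\tfrac{5\pi}{2}t-16\pi^2+\tfrac12)e^{-\pi t}$; the extra $-\tfrac{\pi}{2}t+\tfrac12$ shaves off roughly $1.1\cdot10^{-8}$, just enough to land under $3.9\cdot10^{-7}$. So the fix is simply to keep that term rather than discard it.
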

\begin{proof}
	We have 
	$$T_4 \leq \sum_{\substack{ x \in O_F   \\ \|u x\|^2 \geq 8 }} \left(\pi \|u x\|^4 -16 \pi N(x) - \frac{\|u x\|^2}{2}  \right) e^{-\pi \|u x\|^2}.$$
	Therefore
	\begin{multline*}
		T_4 \leq \sum_{\substack{ x \in O_F  \\ \|u x\|^2 \geq 8 }}\int_{\| u x\|^2}^{\infty} \! \left(\pi^2 t^2 - \frac{5 \pi t}{2} - 16 \pi^2 +\frac{1}{2}\right) e^{- \pi t}\, \mathrm{d}t \\
		\leq   \pi  \int_{8}^{\infty} \! \# B^t \left(\pi^2 t^2 - \frac{5 \pi t}{2} - 16 \pi^2 +\frac{1}{2}\right) e^{- \pi t}\, \mathrm{d}t
	\end{multline*}
	Since the shortest vectors of the lattice $ u O_F$ have length $\lambda \geq 2$, Lemma \ref{Bt} says that 
	$$\#B^t \leq \left( \frac{2\sqrt{t}}{2} +1\right)^4 - \left( \frac{2\sqrt{8}}{2} -1\right)^4 = \left( \sqrt{t} +1\right)^4 - \left( \sqrt{8} -1\right)^4.$$
	Replace this bound for $\#B^t$ to the last integral and compute it, we obtain the result.
\end{proof}

\begin{lem}\label{lemT3}
	For all $s \in [0.9402, 1.0637] $, we have
	$$T_3 < 5.2 \cdot 10^{-7}.$$
\end{lem}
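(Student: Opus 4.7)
The plan is to bound $T_3 = \sum_{x \in B_3} G(s,x)$ by separately controlling the cardinality of $B_3$ and the pointwise size of $G(s,x)$. The argument parallels the proof of Lemma \ref{lemT4}, but since the $y$-range $[4\sqrt{3},8)$ is bounded and the elements of $B_3$ have norm at most $3$, the integral estimate there is replaced by a direct count via Proposition \ref{proB}.

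First, I would determine the possible values of $|N(x)|$ for $x \in B_3$. Since $\|ux\|^2\geq 4|N(x)|^{1/2}$ and $\|ux\|^2<8$ on $B_3$, necessarily $|N(x)|\in\{1,2,3\}$. The case $|N(x)|=1$ is ruled out: such an $x$ equals $\zeta\varepsilon^m$ for some root of unity $\zeta$. For $m=0$ one has $\|u\zeta\|^2=2s^2+2/s^2\leq 2(1.0637)^2+2/(1.0637)^2<4\sqrt{3}$ throughout the interval, while for $|m|\geq 1$ the hypothesis $|\varepsilon|\geq 1+\sqrt{2}$ combined with $s^2\in[0.884,1.131]$ forces $\|u\varepsilon^{\pm 1}\|^2\geq 8$ by direct substitution. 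Hence $B_3\subseteq\mathfrak{B}_2(s)\cup\mathfrak{B}_3(s)$, and Proposition \ref{proB} gives $\#B_3\leq 60$.

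Second, I would bound $G(s,x)$ pointwise. Writing $\alpha=s^2|x|^2$, $\beta=N(x)/(s^2|x|^2)$ and $y=\|ux\|^2=2(\alpha+\beta)$, one checks the identities $\|ux\|^4-16N(x)=4(\alpha-\beta)^2$ and $\|ux\|^2/2+2N(x)/(s^2|x|^2)=\alpha+3\beta$. Minimizing $\alpha+3\beta$ under $\alpha\beta=N(x)$ and $\alpha+\beta=y/2$ yields $\alpha+3\beta\geq y-\tfrac{1}{2}\sqrt{y^2-16N(x)}$, giving the uniform upper bound
\begin{equation*}
G(s,x)\;\leq\;\Bigl(\pi y^2-16\pi N(x)-y+\tfrac{1}{2}\sqrt{y^2-16N(x)}\Bigr)e^{-\pi y}.
\end{equation*}
For $N(x)=2$, the critical point of this bound (solving $\pi^2 y^2-5\pi y/2-32\pi^2+1/2=0$) lies below $4\sqrt{3}$, so the bound is decreasing on $[4\sqrt{3},8)$ and its maximum is $(16\pi-4\sqrt{3}+2)e^{-4\sqrt{3}\pi}$ at $y=4\sqrt{3}$. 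For $N(x)=3$ the bound vanishes at $y=4\sqrt{3}$ and attains its maximum at an explicitly computable interior critical point $y^\ast\in(4\sqrt{3},8)$. Multiplying each maximum by $30$ and adding yields the required numerical estimate $T_3<5.2\cdot 10^{-7}$.

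The main obstacle is the tightness of the numerics. Using only the crude bound $G(s,x)\leq\pi\|ux\|^4 e^{-\pi\|ux\|^2}$ overshoots the target by roughly a factor of two, and even retaining just the $-\|ux\|^2/2$ term (as in Lemma \ref{lemT4}) leaves the total slightly above $5.2\cdot 10^{-7}$. It is therefore essential to keep the $-2N(x)/(s^2|x|^2)$ term as well, i.e.\ to exploit the full expression $\alpha+3\beta$, in order to bring the estimate below the stated threshold.
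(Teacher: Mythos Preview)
Your overall strategy coincides with the paper's: rule out $|N(x)|=1$, observe $B_3\subset\mathfrak{B}_2(s)\cup\mathfrak{B}_3(s)$, invoke Proposition~\ref{proB} to get at most $30$ elements of each norm, and bound $G(s,x)$ pointwise for $N(x)=2$ and $N(x)=3$ separately. The final numerics ($\approx 1.6\cdot 10^{-8}$ for norm~2 and $\approx 1.3\cdot 10^{-9}$ for norm~3, times $30$ each) are exactly those of the paper.

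The one genuine difference is the parametrization of $G(s,x)$. The paper sets $z=s^2|x|^2$, so that for fixed $N(x)$ the quantity $G(s,x)$ becomes an \emph{explicit} one-variable function of $z$; the constraint $4\sqrt{3}\le\|ux\|^2<8$ translates into an explicit $z$-interval (two disjoint arcs for $N(x)=2$, a single interval for $N(x)=3$), and one simply maximizes over that interval. Your route via $y=\|ux\|^2$ forces you to minimize $\alpha+3\beta$ over the two branches determined by $\alpha+\beta=y/2$, $\alpha\beta=N(x)$, introducing the square-root term. This is correct but more circuitous, and it causes a small slip: the critical-point equation you quote, $\pi^2 y^2-\tfrac{5\pi}{2}y-32\pi^2+\tfrac12=0$, is the derivative condition for $(\pi y^2-32\pi-\tfrac{y}{2})e^{-\pi y}$, not for your actual bound $(\pi y^2-32\pi-y+\tfrac12\sqrt{y^2-32})e^{-\pi y}$. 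The monotonicity claim is still true, but your stated justification does not match your function. The paper's $z$-parametrization sidesteps this entirely.
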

\begin{proof}
	Let $x \in B_3$. It is easy to see that $N(x) \neq 1$ (see the proof of Proposition \ref{pro2a}), so $N(x)$ is equal to 2 or 3. In other words, we have $B_3 \subset \mathfrak{B}_2(s) \cup \mathfrak{B}_3(s)$.\\ 
	
	If $N(x) = 2$ then $\|u x\|^2 = 2s^2 |x|^2 + 4/(s^2 |x|^2 )$. Let $z = s^2 |x|^2 $. Since $ 4\sqrt{3} \leq \|u x\|^2 < 8$, we have $z \in (2- \sqrt{2},\sqrt{3}-1]  \cup [\sqrt{3}+1, 2 + \sqrt{2})$. Then for all $z$ in this interval, we have  
	$$G(s,x) = \left((2 z + 4/z)^2 -32 z -1/2(2 z + 4/z)  -4/z \right) e^{-\pi (2 z + 4/z)} \leq 1.6 \cdot 10^{-8}.$$
	
	If $N(x) = 3$ then $\|u x\|^2 = 2s^2 |x|^2 + 6/(s^2 |x|^2 )$. Let $z = s^2 |x|^2 $. Since $ 4\sqrt{3} \leq \|u x\|^2 < 8$, we get $z \in (1, 3)$. Then for all $z$ in this interval, we have 
	$$G(s,x)  = \left( (2 z + 6/z)^2 -32 z -1/2(2 z + 6/z)  -6/z \right)e^{-\pi (2 z + 6/z)} \leq 1.3 \cdot 10^{-9}.$$
	
	Proposition \ref{proB} says that $B_3$ has at most 30 elements of norm 2 and at most 30 elements of norm 3. Thus, 
	$$T_3 \leq 30 \cdot 1.6 \cdot 10^{-8} + 30 \cdot 1.3 \cdot 10^{-9} < 5.2 \cdot 10^{-7}.$$
\end{proof}

\begin{lem}\label{lemT2}
	For all $s \in [0.9402, 1.0637] $, we have
	$$T_2 < 1.65 \cdot 10^{-6}.$$
\end{lem}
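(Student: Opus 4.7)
The plan is to transplant the scheme of Lemma \ref{lemT3} to the window $4\sqrt{2}\le \|ux\|^2<4\sqrt{3}$ that defines $B_2$. As in that lemma, I need two ingredients: a uniform upper bound on the summand $G(s,x)$ for $x\in B_2$, and a bound on $\#B_2$. The cardinality will come from Proposition \ref{proB}, once I show that every $x\in B_2$ has $|N(x)|=2$ and hence $B_2\subset\mathfrak{B}_2(s)$.

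First I would classify $x\in B_2$ by norm. From $\|ux\|^2\ge 4|N(x)|^{1/2}$ we have $|N(x)|\le 3$. The case $|N(x)|=3$ is ruled out because $2z+6/z\ge 4\sqrt{3}$ for every $z>0$, contradicting the strict upper bound $\|ux\|^2<4\sqrt{3}$. The case $|N(x)|=1$ is handled exactly as in the proof of Proposition \ref{pro2a}: write $x=\zeta\varepsilon^m$ for some root of unity $\zeta$ and $m\in\mathbb{Z}$; for $m=0$ one has $\|u\|^2=2s^2+2/s^2<4\sqrt{2}$ on $[0.9402,1.0637]$ (the maximum of $2s^2+2/s^2$ on this interval is attained at the endpoints and is $\approx 4.031$), while for $|m|\ge 1$ the hypothesis $|\varepsilon|\ge 1+\sqrt{2}$ together with the range of $s$ forces $s^2|\varepsilon|^{\pm 2m}\ge 0.884\cdot(1+\sqrt{2})^2>5$, so $\|u\varepsilon^{\pm m}\|^2>4\sqrt{3}$. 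Hence $|N(x)|=2$ for every $x\in B_2$ and $B_2\subset\mathfrak{B}_2(s)$.

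Next I would obtain a uniform pointwise bound on $G(s,x)$. For $x$ with $|N(x)|=2$, setting $z=s^2|x|^2$ gives $\|ux\|^2=2z+4/z$, and the constraint $\|ux\|^2<4\sqrt{3}$ restricts $z$ to $(\sqrt{3}-1,\sqrt{3}+1)$ (the lower bound $4\sqrt{2}$ is automatic since $\min_{z>0}(2z+4/z)=4\sqrt{2}$ is attained at $z=\sqrt{2}$). Substituting into the definition of $G$ from the proof of Proposition \ref{pro2c} yields
$$G(s,x)=\bigl(\pi(2z+4/z)^2-32\pi-z-6/z\bigr)\,e^{-\pi(2z+4/z)},$$
a function purely of $z$. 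A routine single-variable analysis on $(\sqrt{3}-1,\sqrt{3}+1)$ shows that this function is negative near $z=\sqrt{2}$ (where the exponential dominates and the parenthetical factor is small) and admits a modest positive interior maximum of size at most $C$, for some explicit constant slightly above $5\cdot 10^{-8}$.

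Finally, Proposition \ref{proB} gives $\#\mathfrak{B}_2(s)\le 30$ under the standing hypothesis $|\varepsilon|\ge 1+\sqrt{2}$, so
$$T_2=\sum_{x\in B_2}G(s,x)\le 30\,C<1.65\cdot 10^{-6}.$$
The only real obstacle is numerical, namely pinning $C$ down precisely: the extremum of the above expression on $(\sqrt{3}-1,\sqrt{3}+1)$ lies at an interior point with no clean closed form, so it may be cleanest to split the interval (for instance at $z=\sqrt{2}$) and bound $G$ on each piece by endpoint evaluation together with monotonicity of the relevant factors.
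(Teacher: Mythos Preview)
Your proposal is correct and follows essentially the same route as the paper: you show that every $x\in B_2$ has $|N(x)|=2$ (hence $B_2\subset\mathfrak{B}_2(s)$), reduce $G(s,x)$ to a one-variable function of $z=s^2|x|^2$ on $(\sqrt{3}-1,\sqrt{3}+1)$, bound it by a constant just below $5.5\cdot10^{-8}$, and multiply by the cardinality bound $30$ from Proposition~\ref{proB}. Your derived expression $G(s,x)=\bigl(\pi(2z+4/z)^2-32\pi-z-6/z\bigr)e^{-\pi(2z+4/z)}$ is in fact the correct simplification (the paper's displayed version contains typographical slips), and your only imprecision is the phrasing ``$s^2|\varepsilon|^{\pm 2m}\ge\ldots$'' for the $|m|\ge1$ case, where for negative $m$ the quantity is small rather than large---but the conclusion $\|u\varepsilon^m\|^2>4\sqrt{3}$ still holds by the symmetry $z\mapsto 1/z$ of $2z+2/z$.
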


\begin{proof}
	Let $x \in B_2$. Then $N(x) \leq 2$. By an argument similar to the proof of Proposition \ref{pro2a}, we obtain that   $N(x) \neq 1$, so $N(x)= 2$. Therefore $B_2 \subset \mathfrak{B}_2(s)$. Proposition \ref{proB} says that $\#B_2 \leq  \#\mathfrak{B}_2(s) \leq 30$.
	
	Let $z = s^2 |x|^2 $. Then $z \in (\sqrt{3}-1, \sqrt{3}+1)$ since $ 4\sqrt{2} \leq \|u x\|^2 < 4 \sqrt{3}$. Then so 
	$$G(s,x) = \left((2 z + 4/z)^2 -32 z -1/2(2 z + 4/z)  -4/z \right) e^{-\pi (2 z + 4/z)} < 5.5 \cdot 10^{-8}.$$
	Thus, 
	$T_2 \leq \#B_2 \cdot \max_{x \in B_2} G(s,x) < 30 \cdot 5.5 \cdot 10^{-8} = 1.65  \cdot 10^{-6}.$
\end{proof}

\begin{lem}\label{lemT1}
	For all $s \in [0.9402, 1.0637] $, we have
	$$T_1 < -2.22 \cdot 10^{-5}.$$
\end{lem}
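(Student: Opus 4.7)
The plan is to identify $B_1$ explicitly as a very small set, collapse $T_1$ to a one-variable function of $s$, and then verify the required bound by a direct numerical estimate.

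First, I would show $B_1$ consists exactly of the $\omega$ roots of unity of $F$. The inequality $\|u x\|^2 \geq 4 \sqrt{|N(x)|}$ (cf.{\cite[Proposition 3.1]{ref:4}}) combined with the defining bound $\|u x\|^2 < 4 \sqrt{2}$ forces $|N(x)| = 1$, so $x = \zeta \varepsilon^m$ for a root of unity $\zeta$ and some $m \in \mathbb{Z}$. If $|m| \geq 1$, I would apply the estimate used in the proof of Proposition \ref{pro2b}: under $|\varepsilon| \geq 1 + \sqrt{2}$ and $s \in [0.9402, 1.0637]$, whichever of the two summands $2 s^2 |\varepsilon|^{2m}$ and $2/(s^2 |\varepsilon|^{2m})$ is the larger already exceeds $2 \cdot 0.9402^2 \cdot (3 + 2\sqrt{2}) > 10 > 4\sqrt{2}$, contradicting $x \in B_1$. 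Thus $m = 0$ and $\#B_1 = \omega$.

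Second, every root of unity $\zeta$ satisfies $|\zeta| = N(\zeta) = 1$, so $\|u\zeta\|^2 = \|u\|^2 = 2 s^2 + 2/s^2$ and $G(s, \zeta) = G(s, 1)$ is independent of $\zeta$. The sum collapses to
\begin{equation*}
T_1 \;=\; \omega \cdot G(s, 1) \;=\; \omega \left( \pi \|u\|^4 - 16 \pi - \tfrac{1}{2} \|u\|^2 - \tfrac{2}{s^2} \right) e^{-\pi \|u\|^2}.
\end{equation*}
At $s = 1$ the parenthetical simplifies to $-4$, giving $T_1(1) = -4 \omega e^{-4\pi} \leq -8 e^{-4\pi} \approx -2.79 \cdot 10^{-5}$ (using $\omega \geq 2$), which already sets the expected order of magnitude.

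The remaining step is the purely one-variable numerical estimate $\omega \cdot G(s, 1) < -2.22 \cdot 10^{-5}$ on $[0.9402, 1.0637]$. Using $\omega \geq 2$ it suffices to show $G(s, 1) < -1.11 \cdot 10^{-5}$ uniformly on this interval. Here $\|u\|^2 \in [4, 4.031]$, so $e^{-\pi \|u\|^2}$ stays extremely close to $e^{-4\pi}$, while the parenthetical remains uniformly negative (equal to $-4$ at $s = 1$ and no larger than about $-3$ at the endpoints). I would finish by locating the unique maximum of $G(s, 1)$ on the interval---a routine real-variable analysis using the explicit formula above, studying the sign of the derivative of each factor---and evaluating it there. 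The only genuine obstacle is that the bound is close to sharp near the endpoints, so the endpoint estimate must be done precisely; the structural steps merely extend the small-lattice-vector analysis already carried out for $T_2$, $T_3$, and $T_4$.
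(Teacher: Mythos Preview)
Your approach is essentially identical to the paper's: both identify $B_1$ with the roots of unity (the paper simply refers back to the argument of Proposition~\ref{pro2a} for the step $m=0$), collapse $T_1$ to $\omega\,G(s,1)$, and then assert the one-variable numerical bound for $\omega\ge 2$. Your more explicit write-up of $G(s,1)$ and the reduction to $G(s,1)<-1.11\times10^{-5}$ is exactly the content hiding behind the paper's single sentence.

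One caveat on the step you correctly flag as delicate. The term $-2/s^{2}$ breaks the $s\leftrightarrow 1/s$ symmetry of $G(s,1)$, and the right endpoint is the worse one: at $s=1.0637$ one computes $\|u\|^{2}\approx 4.0306$, parenthetical $\approx -3.01$, and $e^{-\pi\|u\|^{2}}\approx 3.17\times10^{-6}$, giving $G(s,1)\approx -9.5\times10^{-6}$ and hence $T_1\approx -1.9\times10^{-5}$ for $\omega=2$. So the stated constant $-2.22\times10^{-5}$ (which matches the \emph{left} endpoint almost exactly) is slightly too strong on the right; your plan to ``evaluate precisely at the maximum'' would uncover this. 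It does not affect Proposition~\ref{pro2c}, since there one only needs $T_1+T_2+T_3+T_4<0$ and Lemmas~\ref{lemT4}--\ref{lemT2} give $T_2+T_3+T_4<2.6\times10^{-6}$.
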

\begin{proof}
	Let $x \in B_1$. Then we  have  $N(x) = 1$. As the proof of Proposition \ref{pro2a}, we have  $x$ is a root of unity of $F$. So 
	$T_1 =  \omega \cdot G(s,1) < -2.22 \cdot 10^{-5}$ for all $s \in [0.9402, 1.0637]$ and all $\omega \geq 2$.

\end{proof}

\section{Case 3: $D$ is  on $\To$ and $|\varepsilon| < 1+ \sqrt{2}$}\label{sec4}
With the notations in Section \ref{sec2c}, it is obvious to see the following lemma.
\begin{lem}\label{pos}
	Let $x \in O_F$. Then for all  $s \in [0.98, 1/0.98]$, we have $G(s,x) > 0$ if $e^{0.54/2} \leq |x| \leq 1+\sqrt{2}$ and $G(s,x) < 0$ if $|x|=1$.
\end{lem}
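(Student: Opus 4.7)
The plan is to reduce the problem to the sign of a compact polynomial in the auxiliary variables $t := s^2|x|^2$ and $N := N(x)$, and then treat the two ranges of $|x|$ separately. Writing $G(s,x) = P(s,x)\,e^{-\pi\|ux\|^2}$ with
\[
P(s,x) = \pi\|ux\|^4 - 16\pi N - \tfrac{1}{2}\|ux\|^2 - \tfrac{2N}{s^2|x|^2},
\]
the sign of $G$ coincides with the sign of $P$. Substituting $\|ux\|^2 = 2(t + N/t)$ and expanding, the polynomial collapses to the clean form
\[
P = 4\pi\Bigl(t - \tfrac{N}{t}\Bigr)^2 - \Bigl(t + \tfrac{3N}{t}\Bigr),
\]
so the sign is governed by the competition between a quadratic off-center term and a linear average term.

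For the first claim, when $|x|=1$ the element $x$ is a root of unity, so $N=1$ and $t = s^2$, and $P$ reduces to $4\pi(s^2 - s^{-2})^2 - (s^2 + 3s^{-2})$. On the narrow interval $s \in [0.98, 1/0.98]$ the quantity $|s^2 - s^{-2}|$ is at most $0.082$, so the first term is bounded by $4\pi \cdot 0.0067 < 0.09$; meanwhile a direct check (the function $s^2 + 3s^{-2}$ is decreasing on this interval and attains its minimum at $s = 1/0.98$) gives $s^2 + 3s^{-2} \geq 3.9$. Hence $P \leq 0.09 - 3.9 < 0$ uniformly.

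For the second claim, with $N = 1$ (the relevant case, since the $x$ contributing to the sum in question is a unit of the form $\zeta\varepsilon^{\pm 1}$), the variable $t = s^2|x|^2$ lies in $[0.98^2 e^{0.54},\, 0.98^{-2}(1+\sqrt{2})^2] \subset [1.64, 6.07]$, safely above $1$. The lower bound $t \geq 1.64$ gives $t - 1/t \geq 1.04$, hence $4\pi(t - 1/t)^2 \geq 13.6$. On the other hand, $t + 3/t$ is increasing for $t \geq \sqrt{3}$, so it attains its maximum on the interval at $t = 6.07$, yielding $t + 3/t \leq 6.6$. Therefore $P \geq 13.6 - 6.6 > 0$.

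The only real difficulty is careful bookkeeping over the two-dimensional range of $(s, |x|)$, but each factor of $P$ is monotone on the intervals involved, so every estimate collapses to an endpoint evaluation. Crucially, the lower cutoff $|x| \geq e^{0.27}$ is calibrated precisely so that $t - 1/t$ stays bounded away from zero even at the left endpoint $s = 0.98$; this is exactly what allows the quadratic off-center term to dominate, and it is the reason the threshold appears in the statement in the specific form $e^{0.54/2}$.
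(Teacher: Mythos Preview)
The paper itself offers no proof of this lemma beyond the phrase ``it is obvious to see,'' so your argument supplies what the paper omits. Your key algebraic reduction
\[
P(s,x) \;=\; 4\pi\Bigl(t - \tfrac{N}{t}\Bigr)^{2} - \Bigl(t + \tfrac{3N}{t}\Bigr), \qquad t = s^{2}|x|^{2},\ N = N(x),
\]
is correct and makes the sign analysis completely transparent; the subsequent numerical checks are sound (with only trivial rounding sloppiness, e.g.\ writing $t\geq 1.64$ and then using $t-1/t\geq 1.04$, which strictly requires the sharper $t\geq 1.648$).

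There is one logical slip worth flagging. You write that ``when $|x|=1$ the element $x$ is a root of unity, so $N=1$.'' This inference is not valid: for an algebraic integer, having \emph{one} archimedean absolute value equal to $1$ does not force all conjugates onto the unit circle, so $N(x)=1$ does not follow from $|x|=1$ alone. In fact, if one takes the lemma literally for arbitrary $x\in O_F$, the conclusion $G(s,x)<0$ can fail: with $|x|=1$ and $N(x)=2$, your own formula gives $P\approx 4\pi - 7 > 0$ at $s=1$. The lemma is only ever applied in the paper to units $x\in\{\zeta,\zeta\varepsilon^{\pm1},\zeta\varepsilon^{\pm2}\}$, for which $N(x)=1$; you already recognise this in the second half (``the relevant case''), and both of your computations are done under $N=1$. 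So your argument proves exactly what is needed, but the justification for restricting to $N=1$ should be that the lemma is used only for units, not that $|x|=1$ forces it.

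A smaller point: when you bound $t+3/t$ on $[1.64,6.07]$, you argue it is increasing for $t\geq\sqrt{3}$, but the left portion $[1.64,\sqrt{3}]$ is not covered by that monotonicity. The conclusion is still correct because the value at $t=1.64$ (about $3.47$) is well below the value at $t=6.07$, so the maximum is indeed at the right endpoint; just say so explicitly.
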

We consider 2 cases: When $\varepsilon$ does not generate $F$ and when $\varepsilon$ generates $F$.

\subsection{Case 3a: $\varepsilon$ does not generate $F$}
We prove the following proposition.
\begin{prop}\label{pro3a}
	Let $F$ be a quadratic extension of some complex quadratic subfield. Assume that $F$ has a  fundamental unit $\varepsilon$ that does not generate $F$  and $|\varepsilon|< 1+\sqrt{2}$. Then $k^0 $ has its unique maximum at the trivial divisor $D_0$ on $\To$.
\end{prop}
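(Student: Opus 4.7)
My plan is to exploit the subfield structure forced by the hypothesis. Since $K$ is complex quadratic, $O_K^*$ consists only of roots of unity, so $\varepsilon \notin K$; as $\varepsilon$ has infinite order and does not generate $F$, the field $K' := \mathbb{Q}(\varepsilon)$ is a quadratic subfield of $F$ distinct from $K$ whose unit group is infinite, hence $K'$ is real quadratic. Therefore $F = KK'$, $\mathrm{Gal}(F/\mathbb{Q}) \cong V_4$, and the generator $\tau$ of $\mathrm{Gal}(F/K)$ restricts to the nontrivial automorphism of $K'$. This yields $\tau(\varepsilon) = \pm\varepsilon^{-1}$, so $|\sigma'(\varepsilon^k)| = |\varepsilon|^{-k}$ for every $k \in \mathbb{Z}$. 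In particular, for every root of unity $\zeta$, $\|u \cdot \zeta \varepsilon^k\|^2 = 2 s^2 |\varepsilon|^{2k} + 2/(s^2 |\varepsilon|^{2k})$, which cleanly decouples the units sum.

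By the symmetry $g(s) = g(1/s)$ from Lemma \ref{h0sym} and the $|\varepsilon|$-periodicity on $\To$, it suffices to show that $g$ attains a strict maximum at $s = 1$ on $[|\varepsilon|^{-1/2}, |\varepsilon|^{1/2}]$. Following the template of Case 2c, I would combine $g'(1) = 0$ (from the symmetry) with $g''(s) < 0$ on this interval, decomposing $g''(s) = (4\pi/s^2) \sum_{x \neq 0} G(s,x)$ into a tail part over $\|u x\|^2 \geq 8$ (bounded via Lemma \ref{Bt} and Lemma \ref{err}, exactly as in Lemma \ref{lemT4}), a medium part with $|N(x)| \in \{2,3\}$, and the units part $T_{\mathrm{u}} = \omega \sum_k G(s, \varepsilon^k)$. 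The pivotal input is Lemma \ref{pos}: for $s \in [0.98, 1/0.98]$, $G(s, \zeta) < 0$ whenever $|\zeta| = 1$ while $G(s, \zeta \varepsilon^{\pm 1}) > 0$ as soon as $|\varepsilon| \in [e^{0.27}, 1+\sqrt{2}]$; one then checks numerically that the dominant negative term $\omega G(s,1)$ outweighs the positive contributions from $\varepsilon^{\pm 1}$, while higher powers decay rapidly since $2t^2 + 2/t^2$ grows in $|\log t|$.

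For the medium part one cannot invoke Proposition \ref{proB}, which required $|\varepsilon| \geq 1 + \sqrt{2}$. Instead I would bound $\#\mathfrak{B}_j(s)$ for $j = 2, 3$ directly by enumerating elements of norm $j$ in $O_F = O_{KK'}$, exploiting the factorization of $(2)$ and $(3)$ in the compositum together with the identity $|\sigma'(x)| = |\tau_{K'}(x)|$ for $x \in K'$; only finitely many candidates survive, and their contribution to $g''$ is controlled as in Lemmas \ref{lemT2} and \ref{lemT3}.

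The main obstacle is that $[|\varepsilon|^{-1/2}, |\varepsilon|^{1/2}]$ may extend beyond $[0.98, 1/0.98]$, so $g'' < 0$ on the whole fundamental interval is not automatic from Lemma \ref{pos}. I would treat the outer part analogously to Sections \ref{sec2a}--\ref{sec2b}, comparing $g(s)$ to $g(1)$ directly and exploiting the exponential decay of contributions outside the main shell. A secondary subtlety arises when $|\varepsilon|$ is close to $1$ (for example if $|\varepsilon|^2$ equals the fundamental unit of $\mathbb{Q}(\sqrt{5})$): several low powers $\varepsilon^k$ then land in the positive region of Lemma \ref{pos}, so establishing $T_{\mathrm{u}} < 0$ reduces to a one-parameter numerical inequality in $|\varepsilon| \in (1, 1+\sqrt{2})$, which I expect to be tractable by monotonicity after factoring out $e^{-4\pi}$.
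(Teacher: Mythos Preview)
Your structural start is right --- $K'=\mathbb{Q}(\varepsilon)$ is indeed a real quadratic subfield and $\tau$ restricts to its nontrivial automorphism --- but you stop one step short of the decisive observation, and that step is what makes the paper's proof work. Since $O_{K'}^*\subset O_F^*$ and $\varepsilon$ is a fundamental unit of $F$, the fundamental unit $\eta$ of $K'$ must lie in $\mu_F\cdot\langle\varepsilon\rangle$, and conversely $\varepsilon\in\langle\pm\eta\rangle$; this forces $|\varepsilon|=|\eta|$. Now invoke the classical lower bound $|\eta|\ge\bigl(\sqrt{\Delta_{K'}}+\sqrt{\Delta_{K'}-4}\bigr)/2$ for real quadratic fields: the hypothesis $|\varepsilon|<1+\sqrt{2}$ leaves only $\Delta_{K'}=5$, so $K'=\mathbb{Q}(\sqrt{5})$ and $\varepsilon=(1+\sqrt{5})/2$ exactly. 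This is the paper's Lemma~\ref{case3a}. In particular $|\varepsilon|$ is not a free parameter in $(1,1+\sqrt{2})$; your ``one-parameter numerical inequality'' and the worry about $|\varepsilon|^2$ being the golden ratio evaporate.

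The second consequence of Lemma~\ref{case3a} is what you really need for the medium part: $2$ and $3$ are inert in $\mathbb{Q}(\sqrt{5})$, hence $N_{K'/\mathbb{Q}}(N_{F/K'}(x))\in\{2,3\}$ is impossible, and $O_F$ contains \emph{no} elements of norm $2$ or $3$. So $\mathfrak{B}_2(s)=\mathfrak{B}_3(s)=\emptyset$ for every $s$, and the sets $B_1,B_2,B_3$ consist only of roots of unity times $\varepsilon^0,\varepsilon^{\pm1}$. Your plan to ``bound $\#\mathfrak{B}_j(s)$ directly by enumerating elements of norm $j$ in $O_{KK'}$'' is not viable as stated, because $K$ is an unspecified complex quadratic field --- there are infinitely many candidates --- whereas the argument via $K'=\mathbb{Q}(\sqrt{5})$ handles them all at once. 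With the medium part gone and $|\varepsilon|$ fixed, the three-step scheme (outer interval as in Proposition~\ref{pro2a}, an intermediate band, and $g''<0$ on a small neighbourhood of $s=1$ via Lemma~\ref{pos}) goes through by direct computation with the single explicit value $|\varepsilon|=(1+\sqrt{5})/2$.
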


We first prove the lemma below.

\begin{lem}\label{case3a}
	Let $F$ be a quadratic extension of some complex quadratic subfield. Assume that $F$ has the fundamental unit $\varepsilon$ that does not generate $F$ and $|\varepsilon| < 1 + \sqrt{2}$. Then $F$ contains the quadratic subfield $K= Q(\sqrt{5})$ and $\varepsilon = (1+\sqrt{5})/2$. In particular, $O_F$ has no elements of norm 2 or 3.
	
\end{lem}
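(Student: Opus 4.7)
The argument has three main steps: locate $\varepsilon$ inside a real quadratic subfield of $F$, identify that subfield as $\mathbb{Q}(\sqrt{5})$ using the bound on $|\varepsilon|$, and then derive the norm statement from how $2$ and $3$ split in $\mathbb{Q}(\sqrt{5})$.

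Since $\varepsilon$ does not generate $F$ and is not a root of unity (because $|\varepsilon|>1$), the field $K' := \mathbb{Q}(\varepsilon)$ is a quadratic subfield of $F$. The unit rank of $O_{K'}^*$ is at least $1$, so $K'$ must be real quadratic (an imaginary quadratic field has only roots of unity as units). I would next show that $\varepsilon$ is, up to sign, a fundamental unit of $K'$ itself: if $\eta$ is a fundamental unit of $K'$ with $|\eta|>1$, then $\eta \in O_F^*$ gives $\eta = \zeta \varepsilon^n$ for some root of unity $\zeta \in F$ and integer $n \geq 1$, while $\varepsilon \in O_{K'}^* = \{\pm 1\} \times \langle \eta \rangle$ gives $\varepsilon = \pm \eta^m$; composing forces $mn=1$ and hence $\varepsilon = \pm\eta$. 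Running through the fundamental units of real quadratic fields then shows that $(1+\sqrt{5})/2$ is the unique candidate with $|\varepsilon|<1+\sqrt{2}$, since the next smallest fundamental unit is $1+\sqrt{2}$ itself, attained in $\mathbb{Q}(\sqrt{2})$. Therefore $K' = \mathbb{Q}(\sqrt{5})$ and $\varepsilon = \pm(1+\sqrt{5})/2$.

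For the final claim, I would observe that the polynomial $x^2-x-1$ is irreducible modulo both $2$ and $3$, so the primes $2$ and $3$ are inert in $O_{K'} = \mathbb{Z}[(1+\sqrt{5})/2]$. Consequently every prime of $O_F$ lying above $2$ has residue degree at least $2$ over $\mathbb{Q}$ (inherited from $K'$) and hence norm divisible by $4$; similarly every prime of $O_F$ above $3$ has norm divisible by $9$. So no ideal of $O_F$ has norm $2$ or $3$, and in particular no element of $O_F$ does. The main delicate point is the middle step — ruling out that $\varepsilon$ is a proper power of the fundamental unit of $K'$; once $\varepsilon$ is known to be fundamental in $K'$, both identifying $K'$ and the norm check are immediate.
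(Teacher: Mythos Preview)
Your argument is correct and follows the same three-step outline as the paper: show $\mathbb{Q}(\varepsilon)$ is a real quadratic subfield, pin it down as $\mathbb{Q}(\sqrt{5})$ using the size bound on $\varepsilon$, and deduce the norm statement from the inertness of $2$ and $3$ there. The one substantive difference is in the middle step. The paper quotes Pohst's lower bound $|\varepsilon|\ge \tfrac{1}{2}\bigl(\sqrt{\Delta_{K'}}+\sqrt{\Delta_{K'}-4}\bigr)$ to force $\Delta_{K'}\le 7$ and hence $\Delta_{K'}=5$; you instead give a self-contained argument that $\varepsilon$ is fundamental in $K'$ and then enumerate real quadratic fields by fundamental unit. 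Your route trades the external citation for the fundamentality step --- which, incidentally, you could bypass altogether: any degree-$2$ algebraic unit with $1<|\varepsilon|<1+\sqrt{2}$ has minimal polynomial $x^2-tx\pm 1$ with $|t|$ small enough that the only possibility is $x^2-x-1$, so $\varepsilon=\pm(1+\sqrt 5)/2$ without ever checking fundamentality. For the last claim the two proofs are essentially dual: the paper pushes a putative element of norm $2$ or $3$ down to $O_{K'}$ via $N_{F/K'}$, while you pull the residue degree up from $K'$ to $F$; both rest on the same fact that $x^2-x-1$ is irreducible mod $2$ and mod $3$.
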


\begin{proof}
	The assumption that $\varepsilon$ does not generate $F$ implies that $K= Q(\varepsilon)$ is a real quadratic subfield of $F$. Let $\Delta_K$ be the discriminant of $K$. Then 
	$$|\varepsilon| \geq \frac{\sqrt{\Delta_K}+\sqrt{\Delta_K-4}}{2}.$$
	See \cite{ref:29}. Since $|\varepsilon| < 1 + \sqrt{2}$, we must have $4 \leq \Delta_K \leq 7$. It is easy to check that $K=\mathbb{Q}(\sqrt{5})$ and $\varepsilon = (1+\sqrt{5})/2$. So the first statement is proved.
	
	Now we suppose that there is an element element $x$ of norm 2 or 3  in $O_F$. Then $y= N_{F/K}(x)$ is in the ring of integers $O_K $ of $K$ and $N_{K/\mathbb{Q}}(y) \in  \{2,3\}$. This is impossible because $2$ and $3$ are inert in $O_K$. Thus, the second statement follows. 
	
\end{proof}

We now prove Proposition \ref{pro3a}.
\begin{proof} 
	By Lemma \ref{case3a}, we have $\varepsilon = (1+\sqrt{5})/2$. With the notations in Section \ref{sec3}, we prove this proposition in 3 steps as Proposition \ref{pro2a}, \ref{pro2b} and \ref{pro2c} respectively.
	\begin{itemize}
		\item Step 1: Let $s \in [|\varepsilon|^{-1/2}, 0.8608) \cup (1.1618, |\varepsilon|^{1/2}]$. Then using the same proof as Proposition \ref{pro2a}, we have $k^0(D) < k^0(D_0)$.
		
		\item Step 2: Let $s \in [0.8608, 0.9770) \cup (1.0235, 1.1618]$. By Lemma \ref{case3a}, there are no elements of norm 2 in $B_2$. So, $B_1 $ and $B_2$ only contain elements of norm 1. Hence $B_1 \cup B_2 \subset \{ \zeta, \zeta \cdot \varepsilon, \zeta \cdot \varepsilon^{-1} \}$ where $\zeta$ runs over the roots of unity of $F$. This leads to
		$$S_1 + S_2 \leq 1 + \omega \cdot ( e^{-\pi \|u \|^2}+e^{-\pi \|u \varepsilon\|^2} + e^{-\pi \|u \varepsilon^{-1}\|^2}) $$ for all $s \in [0.8608, 0.9770) \cup (1.0235, 1.1618]$.
		It is easy to check that for all $s$ in this interval and $\omega \geq 2$, we get
		\begin{multline*}
			\hspace*{0.8cm}1 + \omega \cdot ( e^{-\pi \|u \|^2}+e^{-\pi \|u \varepsilon\|^2} + e^{-\pi \|u \varepsilon^{-1}\|^2}) + 6.31\cdot 10^{-8}\\
			\leq 1 + \omega \cdot e^{-4 \pi}.
		\end{multline*}
		
		Since $S'_2 < 6.31 \cdot 10^{-8}$ by Corollary \ref{sum}, we obtain that
		$$k^0(D) = S_{1} + S_2 + S'_2 \leq 1 + \omega \cdot e^{-4 \pi} < k^0(D_0).$$
		
		\item Step 3: We prove that $g''(s)<0$ for $s \in [0.9770, 1.0235]$. Lemma \ref{case3a} says that there are no elements of norm 2 or 3 in $B_1$, $B_2$ and $B_3$. So their union is contained in $\{ \zeta, \zeta \cdot \varepsilon, \zeta \cdot \varepsilon^{-1} \}$ where $\zeta$ runs over the roots of unity of $F$. In addition, $1 \in B_1$ for every $ s \in [0.9770, 1.0235]$. By Lemma \ref{pos}, we obtain the following. 
		$$T_1+ T_2 + T_3 \leq \omega \cdot (G(s, 1)+ G(s, \varepsilon)+ G(s, \varepsilon^{-1}))< -2.4 \cdot 10^{-5}.$$
		We have $T_4 < 3.9 \cdot 10^{-7} $ by Lemma \ref{lemT4}, so  
		$g''(s)= T_1+T_2 + T_3 + T_4 < 0$ for all $s \in [0.9770, 1.0235]$.

	\end{itemize}
	
\end{proof}

\subsection{Case 3b: $\varepsilon$ generates $F$}
We prove the following proposition.
\begin{prop}\label{pro3b}
	Let $F$ be a quadratic extension of some complex quadratic subfield. Assume that $F$ has a fundamental unit $\varepsilon$ that generates $F$ and $|\varepsilon|< 1+\sqrt{2}$. Then $k^0 $ has its maxima at the trivial divisor $D_0$ on $\To$.
\end{prop}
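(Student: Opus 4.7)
The plan is to follow the three-step strategy of Proposition \ref{pro3a}, with Proposition \ref{proB} now unavailable because $|\varepsilon|<1+\sqrt{2}$. The key observation is that this hypothesis, together with the requirement that $\varepsilon$ generate $F$, forces $F$ to lie in a finite, explicit list of quartic fields, so each field can be handled by direct enumeration of its small-norm integers.

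First I would establish the finiteness. Since $\varepsilon$ is a unit of the totally complex quartic field $F$, its absolute norm is $1$, and its minimal polynomial over $\mathbb{Q}$ has the form $p(x)=x^4-ax^3+bx^2-cx+1\in\mathbb{Z}[x]$ with roots $\varepsilon,\bar\varepsilon,\varepsilon',\bar{\varepsilon}'$ satisfying $|\varepsilon|=|\bar\varepsilon|<1+\sqrt{2}$ and $|\varepsilon'|=|\bar{\varepsilon}'|=1/|\varepsilon|$. Expressing $a,b,c$ as elementary symmetric functions of the roots and bounding each in absolute value yields a finite set of admissible integer coefficients; for each candidate polynomial one checks irreducibility, that $F=\mathbb{Q}(\varepsilon)$ is a quadratic extension of a complex quadratic field, and that $\varepsilon$ is actually the fundamental unit. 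This gives a short explicit list of fields $F$, together with their number of roots of unity $\omega$ and all algebraic integers of norm $1$, $2$, or $3$ below the length thresholds appearing in Lemma \ref{lengthB}.

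For each listed field, I would then run the three-step analysis exactly as in Propositions \ref{pro2a}, \ref{pro2b}, and \ref{pro2c}. In Step 1, on a range $[|\varepsilon|^{-1/2},s_0)\cup(s_0',|\varepsilon|^{1/2}]$ of $s$-values near the endpoints, I bound $k^0(D)$ by combining Corollary \ref{sum} for the tail with an explicit list of the units (roots of unity times small powers of $\varepsilon$) that can contribute to the head. In Step 2, on an intermediate subinterval, the role of Proposition \ref{proB} is played by the concrete enumeration of $\mathfrak{B}_2(s)$ and $\mathfrak{B}_3(s)$ done in the first step. In Step 3, on a short interval around $s=1$, I verify $g''(s)<0$ by showing $T_1+T_2+T_3+T_4<0$, where $T_4$ is controlled by Lemma \ref{lemT4} and $T_1+T_2+T_3$ is computed on the explicit $B_1\cup B_2\cup B_3$ using Lemma \ref{pos} to separate the negative contributions of roots of unity from the controlled positive contributions of the remaining small-norm elements.

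The main obstacle is that the interval $[|\varepsilon|^{-1/2},|\varepsilon|^{1/2}]$ shrinks as $|\varepsilon|\to 1$, so the three subintervals can overlap or force the numerical inequalities of Propositions \ref{pro2a}--\ref{pro2c} to fail at their original thresholds. For the tightest fields in the list, either Steps 1 and 2 must be merged into a single enumeration-based estimate, or Step 3 must be extended to cover the entire interval $[|\varepsilon|^{-1/2},|\varepsilon|^{1/2}]$ by sharpening the bound on $T_1+T_2+T_3$ using the actual roots of unity and the actual small-norm elements of that specific field. Each such case reduces to a finite numerical verification, but choosing interval endpoints and auxiliary constants that make every inequality go through uniformly across the finite list is the delicate step of the argument.
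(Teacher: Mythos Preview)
Your proposal is correct and follows the same three-step template as the paper, but your route to finiteness differs from the paper's. You bound the integer coefficients of the minimal polynomial $x^4-ax^3+bx^2-cx+1$ of $\varepsilon$ directly via the root bounds $|\varepsilon|<1+\sqrt{2}$, $|\varepsilon'|=1/|\varepsilon|$, obtaining a short search over $(a,b,c)$. The paper instead shows in Lemma~\ref{bbddisc1} that the discriminant of $\{1,\varepsilon,\varepsilon^2,\varepsilon^{-1}\}$, hence $\Delta_F$, is at most $16384$, and then invokes Ford's enumeration of totally complex quartic fields of small discriminant (together with the condition $|\Delta_K|\le 21$) to extract the $19$ fields of Table~\ref{table1}. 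Your approach is more self-contained; the paper's buys an explicit discriminant bound and lets one read off $\Delta_K$ immediately, which feeds into Lemma~\ref{discK} to decide a priori when $\mathfrak{B}_2(s)$ or $\mathfrak{B}_3(s)$ is empty. For Step~3 the paper packages the verification into Lemma~\ref{Rsmall}: three uniform conditions ($R_F>0.54$, $\#\mathfrak{B}_j(s)\le 30$, and a bound on $\mathfrak{g}(s,\varepsilon)=\omega\sum_{|m|\le 2}G(s,\varepsilon^m)$) that are checked column-by-column in Table~\ref{table1}, whereas you describe the same computation field-by-field. The paper is also less explicit than you about Steps~1 and~2, referring the reader to the figures; your remark that for the smallest regulators one may have to merge steps or widen the Step~3 interval is exactly the phenomenon the paper absorbs by working on $[0.98,1/0.98]$ and tabulating $\mathfrak{g}(s,\varepsilon)$.
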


First, we prove the following results.
\begin{lem}\label{bbddisc1}
	Let $F$ be a quadratic extension of some complex quadratic subfield. Assume that $F$ has the fundamental unit $\varepsilon$ that generates $F$ with $|\varepsilon| < 1 + \sqrt{2}$. Then the discriminant of $F$ is no more than 16384.
	
\end{lem}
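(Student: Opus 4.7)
The plan is to bound $|\Delta_F|$ by the discriminant of the minimal polynomial of $\varepsilon$ and then carry out an explicit maximization of the latter under the constraints $|\varepsilon| \in (1, 1+\sqrt{2})$ and $N_{F/\mathbb{Q}}(\varepsilon) = 1$.

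First, since $\varepsilon$ generates $F$ over $\mathbb{Q}$, the subring $\mathbb{Z}[\varepsilon] \subseteq O_F$ is of full rank, so $\mathrm{disc}(\mathbb{Z}[\varepsilon]) = [O_F : \mathbb{Z}[\varepsilon]]^2\, \Delta_F$ and hence $|\Delta_F| \leq |\mathrm{disc}(p)|$, where $p(x) \in \mathbb{Z}[x]$ is the monic minimal polynomial of $\varepsilon$ (of degree four).

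Second, I parameterize the four roots of $p$. They are $\varepsilon, \overline{\varepsilon}, \tau(\varepsilon), \overline{\tau(\varepsilon)}$. From $N_{F/\mathbb{Q}}(\varepsilon)=1$ together with the complex-conjugate pairing one gets $|\tau(\varepsilon)| = 1/|\varepsilon|$. Setting $r = |\varepsilon| \in (1, 1+\sqrt{2})$, I write
$$\varepsilon = r e^{i\psi}, \qquad \tau(\varepsilon) = r^{-1} e^{i\phi},$$
for some $\psi, \phi \in \mathbb{R}$.

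Third, I compute $|\mathrm{disc}(p)| = \prod_{i<j}|\alpha_i - \alpha_j|^2$. The two within-conjugate-pair squared differences come out to $4r^2 \sin^2\psi$ and $4r^{-2}\sin^2\phi$, while the four cross differences split into two pairs with values $c - 2\cos(\psi - \phi)$ and $c - 2\cos(\psi + \phi)$, where $c = r^2 + r^{-2}$. The product-to-sum identity $\cos(\psi - \phi) - \cos(\psi + \phi) = 2\sin\psi\sin\phi$ then collapses the six-fold product to
$$|\mathrm{disc}(p)| = 4(a - b)^2 (c - 2a)^2 (c - 2b)^2, \qquad a = \cos(\psi - \phi),\; b = \cos(\psi + \phi).$$

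Fourth, I maximize this expression over $(a,b) \in [-1,1]^2$ with $c$ fixed. The interior critical-point equations force either $a = b$ (where the function vanishes) or $a = b = c/2$, which lies outside $[-1,1]$ as soon as $c > 2$; and $c > 2$ since $r > 1$. A short inspection of the edges $a, b \in \{\pm 1\}$ then identifies the maximum as $16(c^2 - 4)^2$, attained at $\{a, b\} = \{1, -1\}$.

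Finally, $r < 1 + \sqrt{2}$ yields $c < (1+\sqrt{2})^2 + (1+\sqrt{2})^{-2} = 6$, so $|\mathrm{disc}(p)| < 16(36 - 4)^2 = 16384$, and therefore $|\Delta_F| < 16384$.

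The main obstacle will be the trigonometric identity in the third step, which makes the six-fold product collapse to a clean symmetric expression in two bounded variables; without that closed form, the subsequent maximization over four angular parameters would be considerably more intricate. Everything else (the lattice-index bound and the boundary analysis) is routine.
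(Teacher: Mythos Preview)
Your strategy is the same as the paper's: bound $|\Delta_F|$ by the discriminant of a $\mathbb{Z}$-basis built from powers of $\varepsilon$, parameterize the four conjugates by their common absolute value and two angles, and then maximize. Your change of variables $a=\cos(\psi-\phi)$, $b=\cos(\psi+\phi)$ is cleaner than the paper's $X=\cos\psi$, $Y=\cos\phi$, and your closed form $4(a-b)^2(c-2a)^2(c-2b)^2$ is correct and agrees with the paper's $256(1-X^2)(1-Y^2)(X^2+Y^2-2AXY+A^2-1)^2$ after substituting $c=2A$.

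However, your boundary analysis has a gap. On the edge $a=-1$ (and, by symmetry, $b=-1$) the one-variable function $(1+b)^2(c+2)^2(c-2b)^2$ has an interior critical point at $b=(c-2)/4$, which lies in $(0,1)$ precisely when $c\in(2,6)$. At that point the value of your function is $(c+2)^6/16$, and one checks that $(c+2)^6/16>16(c^2-4)^2$ for every $c\in(2,6)$, with equality only at $c=6$. So the claimed maximum $16(c^2-4)^2$ is \emph{not} the global maximum on $[-1,1]^2$; checking only the four corners is not enough.

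Fortunately the conclusion survives: $(c+2)^6/16$ is increasing in $c$, and at $c=6$ it equals $8^6/16=16384$, so the true maximum is still bounded by $16384$. The paper's own proof, working in the $(X,Y)$ variables, does pick up this extra term, recording the bound as $\max\{4(A+1)^6,\,256(A^2-1)^2\}$; the first entry is exactly $(c+2)^6/16$ under $c=2A$. So you should replace the sentence about the edge inspection with the correct two-candidate maximum and note that both candidates are at most $16384$ for $c\le 6$.
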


\begin{proof}
	Since $\varepsilon$ has norm 1, we can assume that its conjugates have the form 
	$a e^{i t_1}$, $a e^{-i t_1}$, $\frac{1}{a} e^{i t_2}$ and $\frac{1}{a} e^{-i t_2} $ where $a= |\varepsilon| < 1 + \sqrt{2}$. Let $ A = \frac{1}{2}(a^2 + 1/a^2)$. Then we have $1 \leq A \leq 3$. 
	
	Because $\varepsilon$ generates $F$, the set $\{1, \varepsilon, \varepsilon^2, \varepsilon^{-1}\}$ contains linearly independent elements of $O_F$. So, the discriminant of this set is nonzero and at least the discriminant of $F$. Thus, we have that
	$$\Delta_F \leq 16^2(1 - X^2)( 1 - Y^2)(X^2 + Y^2 - 2 A X Y + A^2 - 1)^2= f(X,Y)$$ where $X=\cos(t_1)$ and $ Y=\cos(t_2)$ are in $[-1,1]$. 
	
	The function $f(X,Y)$ is nonnegative and is zero on the boundary of the square $[-1, 1]^2$. We find the maximal value of this function on the open square $(-1, 1)^2$ as follows. 
	
	We have
	\[
	\begin{cases} 
		\frac{\partial f}{\partial X} = 0  \\ 
		\frac{\partial f}{\partial Y}= 0
	\end{cases}
	\Longleftrightarrow 
	\begin{cases} 
		-3 x^3 - xy^2 + 4Ax^2y - (A^2 - 3)x - 2A y =0 \\ 
		-3 y^3 - x^2y + 4Ax y^2 - (A^2 - 3)y - 2A x =0.
	\end{cases}
	\]
	
	Now multiply the first by $Y$ and the second by $X$ and subtract, we get 
	$$(X^2 - Y^2)(-2 X Y + 2A) = 0.$$ 
	Since for every $a$ we have $A \ge 1$, it cannot happen that $X Y = A$.
	So $X=Y$  or $X=-Y$. We can easily show that $f(X,X)$ and $f(X,-X)$ are bounded by $\max\{4(A+1)^6, 16^2(A^2-1)^2\}$. Since $A$ varies from 1 to 3, these values are bounded by 16384. Thus, we have $\Delta_F \leq 16384$.
	
\end{proof}

\begin{lem}\label{19fields}
	There are 19 quadratic extensions $F$ (up to isomorphic)of complex quadratic fields of which the fundamental unit $\varepsilon$ generates $F$ and $|\varepsilon|< 1+ \sqrt{2}$.
\end{lem}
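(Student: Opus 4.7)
The plan is to reduce the claim to a finite computer search using the discriminant bound already established in Lemma \ref{bbddisc1}. Since $F$ is a quadratic extension of a complex quadratic field, $F$ is a totally complex quartic field (signature $(0,2)$), and by Lemma \ref{bbddisc1} its absolute discriminant satisfies $|\Delta_F| \leq 16384$. The set of such quartic fields up to isomorphism is finite and can be enumerated by classical means (for instance via the LMFDB or a direct PARI/GP computation using \texttt{nfinit} together with Hunter-type search routines).

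Next, I would filter this finite list in two successive steps. First, retain only those fields $F$ that actually contain a complex (imaginary) quadratic subfield $K$; this can be checked by factoring $x^4-\Delta_F \cdot (\cdots)$ or, more directly, by asking whether the Galois closure or the subfield lattice of $F$ contains an imaginary quadratic field (in PARI: \texttt{nfsubfields(F,2)} followed by a test of the signature of each subfield). Second, for each surviving $F$, compute a system of fundamental units of $O_F^*$; by Dirichlet's unit theorem the free part has rank $r_1+r_2-1 = 1$, so there is a single fundamental unit $\varepsilon$ (unique up to sign and multiplication by roots of unity). Then retain only those $F$ satisfying both $|\varepsilon| < 1 + \sqrt{2}$ and $\mathbb{Q}(\varepsilon) = F$; the latter is equivalent to $\varepsilon$ not lying in any proper subfield of $F$, which, in view of the subfield list computed previously, reduces to checking that $\varepsilon$ is not contained in the (at most one) real quadratic subfield of $F$.

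Finally, I would tabulate the fields that pass both filters, verifying that there are exactly $19$ of them, and record each as a defining polynomial together with its fundamental unit, so that the later sections of the paper can treat them one by one (or in small families). The main obstacle is not mathematical but organizational: ensuring that the enumeration of totally complex quartic fields with $|\Delta_F| \leq 16384$ is complete, and that units are computed under the Generalized Riemann Hypothesis only when the unconditional certification (via \texttt{bnfinit(...,1)} in PARI or equivalent) has been performed. Once the list is certified, the lemma follows by direct inspection.
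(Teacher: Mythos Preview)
Your proposal is correct and follows essentially the same strategy as the paper: use the discriminant bound $\Delta_F \leq 16384$ from Lemma~\ref{bbddisc1} to reduce to a finite list, then enumerate and filter. The only organizational difference is that the paper first deduces $|\Delta_K|\leq 21$ and then applies Ford's enumeration of quadratic extensions of each admissible complex quadratic $K$, whereas you enumerate all totally complex quartics of bounded discriminant and filter afterwards for an imaginary quadratic subfield; both routes arrive at the same finite check.
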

\begin{proof}
	Let $K$ be a complex quadratic subfield of $F$ with the discriminant $\Delta_K$. By Lemma \ref{bbddisc1}, we obtain that   $\Delta_F\leq 16384$. So,  we have  $|\Delta_K|  \leq 21$ (see Section 2 in \cite{ref:28} for more details). Using this and Ford's method in Section 5 and 6 in \cite{ref:28}, we can find all quadratic extensions of complex quadratic fields which have the discriminant at most 16384. Then by eliminating the case in which $|\varepsilon| \geq  1+ \sqrt{2}$ or $\varepsilon$ does not generate $F$ (see Lemma \ref{case3a}), we obtain 19 quartic fields listed in Table \ref{table1} below.
\end{proof}

In Table \ref{table1}, the second column contains the polynomials $P$ defining the quartic fields $F$ and the third column contains their regulators $R_F$. The fourth column shows the discriminant of some complex quadratic subfield $K $ of $F$. The seventh  column contains  upper bounds for $ \mathfrak{g}(s, \varepsilon)$ (see Lemma \ref{Rsmall}) when $s$ varies in the interval $[0.98,1/0.98]$. Note that computing an upper bound for $ \mathfrak{g}(s, \varepsilon)$ in Table \ref{table1}  is easy since it depends only on $s$ when $|\varepsilon|=e^{R_F/2}$ is given. The fifth and sixth columns are the cardinalities of the set $\mathfrak{B}_2(s)$ and $\mathfrak{B}_3(s)$ (that can be computed by using Lemma \ref{discK} and Remark \ref{reBj}).

\begin{center}
	\captionof{table}{}
	\label{table1}
	\resizebox{\textwidth}{!}{
		\begin{tabular}{|c|c | c |c|c |c|c|}
			\hline
			$$ &$ P $ & $R_F$ & $\Delta_K$ & $\#\mathfrak{B}_2(s)$ &  $\#\mathfrak{B}_3(s)$ & $\mathfrak{g}(s, \varepsilon)\leq$ \\ [0.5ex] 
			
			\hline 
			1& $x^4 - 3x^3 + 9$ & $0.5435$ & $-3$ & 0 & $0^*$ & $ -2.7 \cdot 10^{-6}$ \\      
			
			\hline 
			2 & $x^4 - x^3 + x + 1$ & $ 0.6330$ & $-7$ &  $6^*$ & 0 & $  -8.2\cdot 10^{-6}$ \\
			
			\hline 
			3 & $x^4 + 16x + 20$ & $0.7328$ & $-4$ & $0^*$ & 0 & $  -1.5\cdot 10^{-5}$ \\                                   
			
			\hline 
			4 &  $x^4 - x^3 + x^2 + x + 1$ & $0.7672$ & $-11$ &  0 & $0^*$ & $ -1.7 \cdot 10^{-5}$ \\
			
			\hline 
			5 & $	x^4 - x^3 + 2x + 1 $ & $0.8626$ & $-3$ & 0 & $0^*$ & $ -2.2 \cdot 10^{-5}$\\

			\hline 
			6 & $x^4+ 8x + 8   $ & $1.0613$ & $-4$ & $8^*$ & 0 & $ -2.6 \cdot 10^{-5}$ \\
			
			\hline 
			7 &  $x^4 - x^3 + 3x^2 + x + 1 $ & $ 1.1989$ & $-19$ &  0 & 0 & $  -2.6\cdot 10^{-5}$ \\
			
			\hline 
			8 & $x^4+36 $ & $ 1.3170$ & $-3; -4$ &  0 & 0 & $ -2.6 \cdot 10^{-5}$ \\
			\hspace*{0.2cm} & \hspace*{0.2cm} &  $\Delta_F = 144$ & \hspace*{0.2cm}& \hspace*{0.2cm}& \hspace*{0.2cm} & \hspace*{0.2cm} \\
			
			\hline 
			9 &  $x^4  + 4x^2 + 1 $ & $1.3170$  & $-24$ &  0 & 0 & $  -2.6 \cdot 10^{-5}$ \\
			\hspace*{0.2cm} & \hspace*{0.2cm} & $\Delta_F = 2304$ & \hspace*{0.2cm}& \hspace*{0.2cm}& \hspace*{0.2cm} & \hspace*{0.2cm} \\
			\hline 
			10 & $x^4 - x^3 + 4x^2 + x + 1 $ &  	$1.4290$   & $-23$ &  0 & 0 & $ -2.6 \cdot 10^{-5}$\\    
			
			\hline	
			11 & $x^4 - 3x^3 + 4x^2 + 1$  & 	$1.4608$   &  $-3$ & 0 & $0^*$ & $ -2.6 \cdot 10^{-5}$\\
			
			\hline
			12 & $x^4 +7 $ 		&	   	$1.4860$   & $-7$ &  $4^*$ & 0 & $ -2.6 \cdot 10^{-5}$\\
			
			\hline
			13 & $x^4 + 4x + 5 $  	&	   	$1.5286$   & $-4$ & $4^*$ & 0 & $ -2.6 \cdot 10^{-5}$\\
			
			\hline
			14 & $x^4 - x^3 - x^2 - 2x + 4$ & $1.5668$   & $-3; -7$ &  0 & 0 & $ -2.6 \cdot 10^{-5}$\\
			
			\hline
			15 & $x^4 +20$  		 	&	$1.6169$   &  $-20$ &  0 & 0 & $ -2.6 \cdot 10^{-5}$\\
			
			\hline
			16 &  $x^4 +3$ 			 & $1.6629$   & $-3$ &  0 & $6^*$ & $ -2.6 \cdot 10^{-5}$ \\
			
			\hline
			17 &  $x^4 - x^3 - 4x + 5$ & 		$1.6780$   & $-11$ &  0 & $0^*$ & $ -2.6 \cdot 10^{-5}$ \\
			
			\hline
			18 &  $x^4 - x^3 + 4x^2 - 6x + 3$ &   	$1.7366$   & $-3$ &  0 & $0^*$ & $ -2.6 \cdot 10^{-5}$ \\
			
			\hline 
			19 & $x^4 +135			$ & $	1.7400$ & $-15$ &  0 & 0 & $  -2.6 \cdot 10^{-5}$\\      
			\hline
		\end{tabular}
	}
\end{center}

\begin{lem}\label{discK} 
	Let $F$ be a quadratic extension of a complex quadratic subfield $K$ and let $\Delta_K$ be the discriminant of $K$. Assume that $\mathfrak{B}_2(s)$ or $\mathfrak{B}_3(s)$ is nonempty. 
	Then $\Delta_K \in \{-3, -4,-7,-11 \}$. Moreover, if $\Delta_K \in \{-3,-11 \}$ then $\mathfrak{B}_2(s) = \emptyset$ and if $\Delta_K \in \{-4,-7 \}$ then $\mathfrak{B}_3(s) = \emptyset$.	
\end{lem}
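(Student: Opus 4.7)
The plan is to push the problem from $O_F$ down to $O_K$ via the relative norm and then run a small Diophantine analysis over imaginary quadratic rings of integers.

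First, for any $x \in \mathfrak{B}_j(s)$ with $j \in \{2, 3\}$, I would set $y := N_{F/K}(x) = x \cdot \tau(x) \in O_K$. Because $\tau$ fixes $K$ pointwise, the embeddings $\sigma$ and $\sigma' = \sigma \circ \tau$ of $F$ restrict to the same embedding of $K$, so
\[
N_{K/\mathbb{Q}}(y) \;=\; |\sigma(y)|^2 \;=\; |\sigma(x)\sigma'(x)|^2 \;=\; |\sigma(x)|^2\, |\sigma'(x)|^2 \;=\; N_{F/\mathbb{Q}}(x) \;=\; j.
\]
Hence $O_K$ must contain an element of norm exactly $j$.

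Second, I would enumerate the imaginary quadratic fields $K = \mathbb{Q}(\sqrt{-m})$ (with $m > 0$ squarefree) whose ring of integers admits such an element. When $m \equiv 1, 2 \pmod 4$, $O_K = \mathbb{Z}[\sqrt{-m}]$ and the norm form is $a^2 + m b^2$; when $m \equiv 3 \pmod 4$, $O_K = \mathbb{Z}[(1+\sqrt{-m})/2]$ and the norm form is $((2a+b)^2 + m b^2)/4$. In both situations the equation $N(y) = j$ forces $|b| \leq 1$ for $j = 2$ and $|b| \leq 2$ for $j = 3$, reducing the search to a handful of cases. A direct check then yields that a norm-$2$ element exists precisely when $\Delta_K \in \{-4, -7\}$, and a norm-$3$ element exists precisely when $\Delta_K \in \{-3, -11\}$.

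Taking the union gives $\Delta_K \in \{-3, -4, -7, -11\}$, and the disjointness of the two lists $\{-4, -7\}$ and $\{-3, -11\}$ yields the ``moreover'' statements: $\Delta_K \in \{-3, -11\}$ forces $\mathfrak{B}_2(s) = \emptyset$, and $\Delta_K \in \{-4, -7\}$ forces $\mathfrak{B}_3(s) = \emptyset$. The delicate point is the borderline discriminant $\Delta_K = -8$ (i.e.\ $K = \mathbb{Q}(\sqrt{-2})$), where the form $a^2 + 2b^2$ also represents both $2$ and $3$; this case must be excluded using the standing hypotheses of Case 3b, since the enumeration of Lemma \ref{bbddisc1} and Table \ref{table1} produces no quartic field $F$ containing $\mathbb{Q}(\sqrt{-2})$ with $\varepsilon$ generating $F$ and $|\varepsilon| < 1 + \sqrt{2}$. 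I expect this table lookup, rather than the clean Diophantine reduction, to be the main substantive ingredient of the proof.
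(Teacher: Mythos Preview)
Your approach is the same as the paper's: push down via the relative norm $N_{F/K}$ to obtain an element of $O_K$ of norm $2$ or $3$, bound $|\Delta_K|$ by the resulting Diophantine constraint, and then check the few surviving discriminants. The paper phrases the last step as ``for $\Delta_K\in\{-3,-11\}$ the prime $2$ is inert in $O_K$'' and ``for $\Delta_K\in\{-4,-7\}$ the prime $3$ is inert'', which is equivalent to your explicit norm-form check.

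Your observation about $\Delta_K=-8$ is correct and is in fact sharper than the paper. The paper's own proof writes ``So the possible values of $\Delta_K$ are $-3,-4,-7,-8$ and $-11$'' and then handles only the other four, never eliminating $-8$ (where $a^2+2b^2$ represents both $2$ and $3$, so neither prime is inert). Thus the lemma as a freestanding general statement is not fully proved in the paper either. Your proposed rescue---invoking the Case~3b enumeration and noting that no field in Table~\ref{table1} has $\Delta_K=-8$---is exactly what is needed for the application, and is the honest way to close the gap; just be aware that this makes the lemma conditional on the Case~3b hypotheses rather than a general fact about all quadratic extensions of complex quadratic fields.
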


\begin{proof}
	Assume that $\mathfrak{B}_2(s)$ or $\mathfrak{B}_3(s)$ is nonempty. Then there is an element $x$ of $O_F$ of norm $N_{F/\mathbb{Q}}(x) \in \{2,3\}$. So the element $y= N_{F/K}(x) \in O_K$ also has norm $2$ or $3$. 
	This means that there are some $a,b \in \mathbb{Z}$ such that $a^2 + |\Delta_K|b^2 \in \{8, 12\}$. It follows that $|\Delta_K|$ is at most $12$. 
	So the possible values of $\Delta_K $ are $-3,-4,-7,-8$ and $-11$. 
	For $\Delta_K \in \{-3, -11\}$, the prime $2$ is inert, so there are no elements of norm $2$. In other words, we get $\mathfrak{B}_2(s) = \emptyset$. 
	For $\Delta_K \in \{-4, -7\}$,  the prime $3$ is inert, so $\mathfrak{B}_3(s) = \emptyset$. 
\end{proof}


\begin{rem}\label{reBj}
	Let $F$ be a quadratic extension of a complex quadratic subfield $K$ and let $\Delta_K$ be the discriminant of $K$. 
	By this lemma, we can check whether $F$ has $\mathfrak{B}_2(s) = \emptyset$ or $\mathfrak{B}_3(s) = \emptyset$ by checking if the value of  $\Delta_K$ is in the set $\{-3, -4,-7,-11 \}$ (and this can be easily tested by using \texttt{sage}). For example, the first quartic field in Table \ref{table1} contains a complex quadratic subfield $K$ with $\Delta_K =-3$, so we have $\mathfrak{B}_2(s) = \emptyset$ and since the seventh quartic field in Table \ref{table1} contains a complex quadratic field $K$ with $\Delta_K =-19$, so we have $\mathfrak{B}_2(s) = \mathfrak{B}_3(s)=\emptyset$. 
	
	However, in some cases, the discriminant $\Delta_K$ does not show whether $\mathfrak{B}_2(s)$ or $\mathfrak{B}_3(s)$ is empty. For instance, for the first number field in Table \ref{table1}, we do not know how many elements $\mathfrak{B}_3(s)$ has.
	There are 12 such cases (marked with $*$ in Table \ref{table1}). So, we have to compute $\#\mathfrak{B}_2(s)$ for the quartic fields 2, 3, 6, 12 and 13 and compute $\#\mathfrak{B}_3(s)$ for the quartic fields 1, 4, 5, 11, 16, 17 and 18 in Table \ref{table1}. 
	
	For these quartic fields, to count the number of elements of $\mathfrak{B}_2(s)$ and $\mathfrak{B}_3(s)$, we first find an LLL-reduced basis $\{ b_1, b_2, b_3, b_4\}$  of the lattice $O_F$. Let $x \in \mathfrak{B}_j(s)$ with $j =2, 3$. Then $x= s_1 b_1 + s_2 b_2 + s_3 b_3 + s_4 b_4 $ for some integers $s_1, s_2, s_3, s_4$. By Lemma \ref{lengthB}, we have  $\|x\|^2 < 11$. Since $\|b_1\| \geq \|1\| =2$, we have
	$$|s_i| \leq 2^{3/2} (3/2)^{4-i} \frac{\|x\|}{\|b_1\|} \leq 2^{3/2} (3/2)^{4-i} \frac{\sqrt{9.2}}{2} \text{ for all } i =1, 2, 3, 4.$$
	See Section 12 in \cite{ref:1}.
	So $$|s_1| \leq 15, \hspace*{1cm}|s_2| \leq 10,\hspace*{1cm} |s_3| \leq 7 \hspace*{0.5cm}\text { and }\hspace*{0.5cm} |s_4| \leq 4.$$
	By computing $16 \cdot 21 \cdot 15 \cdot 9 = 45360$ possibilities of $x$ (up to sign) obtained from these values of $s_1, s_2, s_3, s_4$, then checking their norms,  we can easily obtain the cardinality of $\mathfrak{B}_j(s)$.\\
	Another method to compute the cardinalities of $\mathfrak{B}_2(s)$ and $\mathfrak{B}_3(s)$ is the Fincke--Pohst 
	algorithm {\cite[Algorithm 2.12]{ref:40}} that is implemented in \texttt{pari-gp} by the function \texttt{qfminim}.  
\end{rem}

Denote by  
$$\mathfrak{g}(s, \varepsilon)= \omega \cdot(G(s,1) + G(s, \varepsilon) + G(s, \varepsilon^{-1})+ G(s, \varepsilon^2) + G(s, \varepsilon^{-2})).$$ 
\begin{lem}\label{Rsmall}
	If $F$ satisfies the following conditions.
	\begin{itemize}
		\item[i)] $R_F > 0.54$, 
		\item[ii)] For each $s \in [0.98, 1/0.98]$, we have $\#\mathfrak{B}_j(s) \leq 30$ for $j=2,3$ and
		\item[iii)] For all $s \in [0.98, 1/0.98]$, we have 
		$\mathfrak{g}(s, \varepsilon) \leq -2.6 \cdot 10^{-6}, $ 
	\end{itemize} 
	then  $g''(s) <0$ for all   $s \in [0.98, 1/0.98]$.
\end{lem}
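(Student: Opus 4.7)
The plan is to decompose $g''(s) = \frac{4\pi}{s^2} \sum_{x \in O_F \setminus \{0\}} G(s,x)$ into three pieces, one for each hypothesis, and verify that the total is negative. First, I would partition $O_F \setminus \{0\}$ as a disjoint union $U \sqcup V \sqcup W$, where $U = \{\zeta\varepsilon^k : \zeta \text{ a root of unity of } F,\ |k| \leq 2\}$, so that its contribution is exactly $\mathfrak{g}(s,\varepsilon)$; $V$ consists of the non-unit elements with $|N(x)| \in \{2,3\}$ and $\|ux\|^2 < 8$ (hence $V \subseteq \mathfrak{B}_2(s) \cup \mathfrak{B}_3(s)$); and $W$ is everything else. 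Hypothesis (iii) immediately gives $\sum_U G(s,x) \leq -2.6 \cdot 10^{-6}$.

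The crucial step, and the main obstacle, is to confirm that $W$ lies entirely in the tail region $\{x : \|ux\|^2 \geq 8\}$, so that the estimate of Lemma \ref{lemT4} can be applied. Elements with $|N(x)| \geq 4$ fall there automatically since $\|ux\|^2 \geq 4\sqrt{|N(x)|} \geq 8$, and any norm-$2$ or $3$ element with $\|ux\|^2 \geq 8$ is in $W$ by construction. The only case requiring hypothesis (i) is a unit $\zeta\varepsilon^k$ with $|k| \geq 3$; with the convention $R_F = 2\log|\varepsilon|$, the bound $R_F > 0.54$ gives $|\varepsilon|^{2|k|} \geq e^{3 \cdot 0.54} \approx 5.05$, so for every $s \in [0.98, 1/0.98]$ the quantity $z = s^2 |\varepsilon|^{\pm 2k}$ lies outside the interval $[2-\sqrt{3},\, 2+\sqrt{3}]$, and hence $\|u\zeta\varepsilon^k\|^2 = 2z + 2/z > 8$. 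This short numerical verification is exactly why $\mathfrak{g}(s,\varepsilon)$ is defined to absorb the powers $|k| \leq 2$: in the small-regulator regime, $\varepsilon^{\pm 1}$ and $\varepsilon^{\pm 2}$ may lie inside $B_1$, $B_2$, or $B_3$, so the clean argument of Section \ref{sec3} is no longer available and these contributions must be handled together with the constant term via condition (iii).

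For $V$ I would combine hypothesis (ii) ($\#\mathfrak{B}_j(s) \leq 30$) with the pointwise bounds on $G(s,x)$ already computed inside the proofs of Lemmas \ref{lemT2} and \ref{lemT3}: namely $G(s,x) \leq 5.5 \cdot 10^{-8}$ when $|N(x)| = 2$ and $\|ux\|^2 \in [4\sqrt{2}, 4\sqrt{3})$, $G(s,x) \leq 1.6 \cdot 10^{-8}$ when $|N(x)| = 2$ and $\|ux\|^2 \in [4\sqrt{3}, 8)$, and $G(s,x) \leq 1.3 \cdot 10^{-9}$ when $|N(x)| = 3$ and $\|ux\|^2 \in [4\sqrt{3}, 8)$. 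This yields $\sum_V G(s,x) < 2.2 \cdot 10^{-6}$. For $W$, the telescoping estimate from the proof of Lemma \ref{lemT4} applies without change (it uses only the shortest-vector bound $\lambda \geq 2$ and the positivity of the integrand on $[8,\infty)$), so $\sum_W G(s,x) \leq 3.9 \cdot 10^{-7}$. Combining,
\[
\sum_{x \in O_F \setminus \{0\}} G(s,x) \leq -2.6 \cdot 10^{-6} + 2.2 \cdot 10^{-6} + 3.9 \cdot 10^{-7} < 0,
\]
which is exactly $g''(s) < 0$ on $[0.98, 1/0.98]$.
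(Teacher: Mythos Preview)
Your proposal is correct and follows essentially the same route as the paper: use hypothesis (i) to push $\zeta\varepsilon^{m}$ with $|m|\ge 3$ into the tail $\{\|ux\|^2\ge 8\}$, bound the norm-$2$ and norm-$3$ contributions via hypothesis (ii) together with the pointwise estimates from Lemmas~\ref{lemT2}--\ref{lemT3}, bound the tail via Lemma~\ref{lemT4}, and offset everything with hypothesis (iii). The only cosmetic difference is that the paper keeps the partition $T_1+T_2+T_3+T_4$ by $\|ux\|^2$-ranges and invokes Lemma~\ref{pos} to justify the inequality $(\text{norm-}1\text{ part of }T_1+T_2+T_3)\le \mathfrak g(s,\varepsilon)$, whereas you place the full set $\{\zeta\varepsilon^{m}:|m|\le 2\}$ into $U$ so that its contribution is \emph{exactly} $\mathfrak g(s,\varepsilon)$, and then use the positivity of the integrand in Lemma~\ref{lemT4} to bound the subset $W$ of the tail; both devices serve the same purpose and yield the same numerics.
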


\begin{proof}
	Since $R_F > 0.54$, we have $\|\varepsilon^m u\|^2 \geq 4 \sqrt{4}$ for all integers $|m| \geq 3$ and $s \in [0.98, 1/0.98]$. Thus, if $x \in B_i$ for $i =1, 2, 3$ and $N(x) =1$ then $x \in \{ \zeta, \zeta \cdot \varepsilon, \zeta \cdot \varepsilon^{-1} , \zeta \cdot \varepsilon^2, \zeta \cdot \varepsilon^{-2} \}$ where $\zeta$ runs over the roots of unity of $F$. This and the fact that $ 1 \in B_1$ together with  Lemma \ref{pos} imply that
	
	$$T_1 +T_2+T_3 \leq \mathfrak{g}(s, \varepsilon) + \sum_{x \in B_2, N(x) \neq 1} G(s,x) + \sum_{x \in B_3, N(x) \neq 1} G(s,x).$$
	
	Using condition ii) and an argument similar to the proof of Lemma \ref{lemT2}, \ref{lemT3}, we obtain that $\sum_{x \in B_2, N(x) \neq 1} G(s,x) \leq 1.65 \cdot 10^{-6}$ and\\
	$\sum_{x \in B_3, N(x) \neq 1} G(s,x) \leq 5.2 \cdot 10^{-7}$. 
	
	By assumption iii), we get
	$\mathfrak{g}(s, \varepsilon)\leq -2.6 \cdot 10^{-6}$. Moreover, Lemma \ref{lemT4} says that $T_4 \leq 3.9 \cdot 10^{-7}$. Since $g''(s) = T_1 +T_2 + T_3 + T_4 $, the result follows. 
	
\end{proof}

Now we prove Proposition \ref{pro3b}.
\begin{proof}
	Lemma \ref{19fields} says that there are only 19 quartic fields satisfying the conditions of Proposition \ref{pro3b}. They are given in Table \ref{table1}. 
	
	We can prove that in this case, $h^0$ has its unique global maximum at $D_0$ in 3 steps (see the proof of Proposition \ref{pro3a}). The readers can easily check Step 1 and Step 2 and see the maximum of $h^0$ in Figure \ref{Rverysmall}, \ref{68111619}, \ref{9101518} and \ref{712171314}. In these figures, $h^0$ is periodic and the period is the regulator of the number field. 
	
	Here we only prove Step 3. In other words, we prove that $h^0$ has its local maximum at $D_0$ on $\To$. 
	
	We have known that $g'(1) =0$ (see the proof of Proposition \ref{pro2c}). So it is sufficient to prove that $g''(s)<0$ on the interval $[0.98, 1/0.98]$.  By Lemma \ref{Rsmall}, this can be done by checking 3 conditions i), ii) and iii). Table \ref{table1} shows that all 19 number fields satisfy these conditions. Therefore, the result follows.
\end{proof}

\begin{figure}[h]
	\centering
	\includegraphics[width=0.6\linewidth]{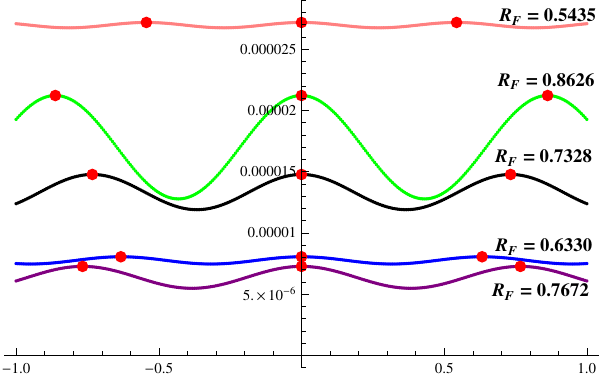}
	\caption{{\hspace*{0.5cm}} 	\label{Rverysmall}}
\end{figure}   

\begin{figure}[h]
	\centering
	\includegraphics[width=0.6\linewidth]{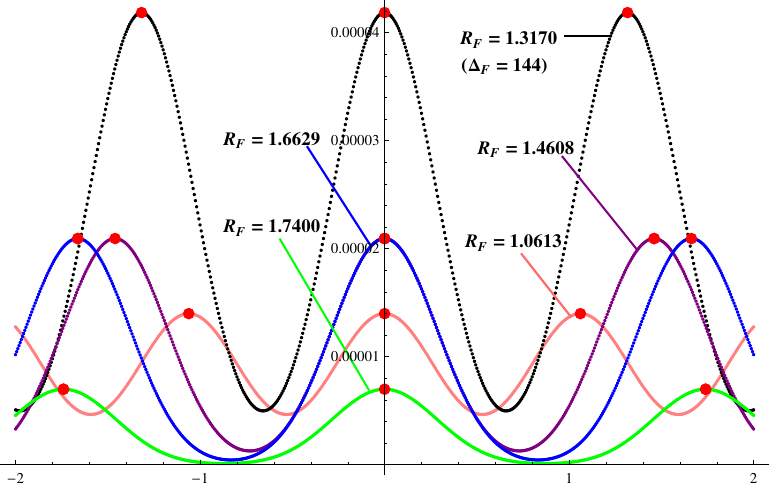}
	\caption{{\hspace*{0.5cm}}	\label{68111619}}
\end{figure}

\begin{figure}[h]
	\centering
	\includegraphics[width=0.6\linewidth]{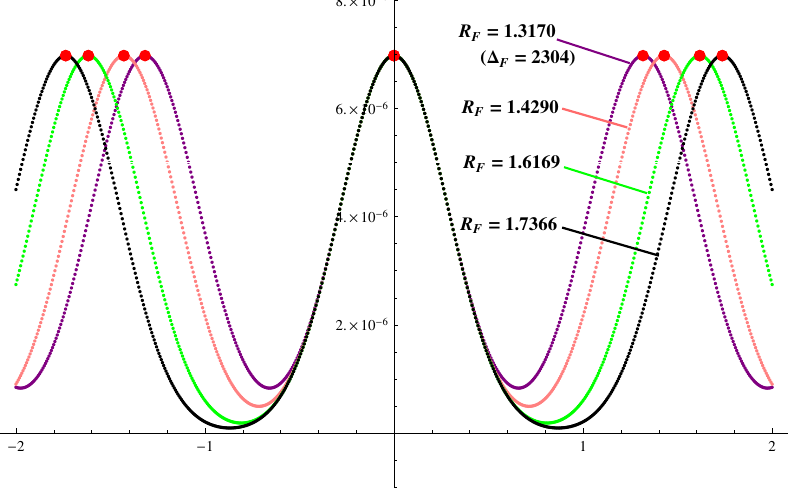}
	\caption{{\hspace*{0.5cm}}	\label{9101518}}
\end{figure}

\begin{figure}[h]
	\centering
	\includegraphics[width=0.6\linewidth]{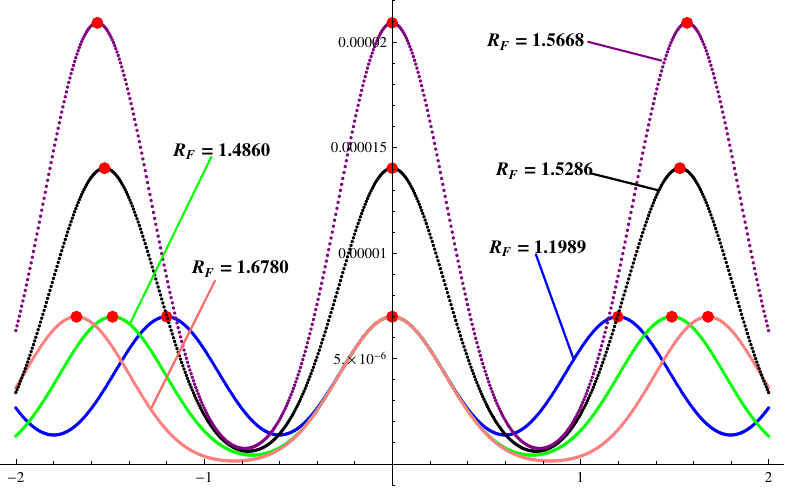}
	\caption{{\hspace*{0.5cm}}	\label{712171314}}
\end{figure}

\newpage
\section*{Acknowledgement}
I would like to thank  Ren\'{e} Schoof for discussion and very valuable comments. I also would like to thank Wen-Ching Li and the National Center for Theoretical Sciences (NCTS) for supporting and hospitality during the fall 2014.\\
This research was partially supported by the Academy of Finland (grants  $\#$276031, $\#$282938, and $\#$283262). The support from the European Science Foundation under the COST Action IC1104 is also gratefully acknowledged. 


\end{document}